\newtheorem{theorem}{Theorem}[section]
\newtheorem{lemma}[theorem]{Lemma}
\newtheorem{proposition}{Proposition}
\theoremstyle{definition}
\newtheorem{definition}[theorem]{Definition}
\newtheorem{remark}{Remark}
\newcommand{\tc }{\mathtt{c}}
\newcommand{\tb}{\mathtt{b}}
\title[Transf. of En. through fast diff. channels in res. PDEs on the circle] 
      {Transfers of energy through fast diffusion channels in some resonant PDEs on the circle}
\author[Filippo Giuliani]{}
\subjclass{Primary: 37K45, 35B34; Secondary: 35B35.}
 \keywords{Transfers of energy, Hamiltonian PDEs, Infinite dimensional dynamical systems, Fast Diffusion, Growth of Sobolev norms.}
 \email{filippo.giuliani@upc.edu}
\thanks{The author is supported by the European Research Council (ERC) 
under the European Union's Horizon 2020
research and innovation programme under grant agreement 
No 757802.}
\begin{document}
\maketitle

\centerline{\scshape Filippo Giuliani$^*$}
\medskip
{\footnotesize
 \centerline{Departament de Matematiques, Universitat Politecnica de Catalunya,}
 \centerline{ETSEIB, Avinguda Diagonal 647}
   \centerline{08028 Barcelona, Spain}
} 

\begin{abstract} 
In this paper we consider two classes of resonant Hamiltonian PDEs on the circle with non-convex (respect to actions) first order resonant Hamiltonian. 
We show that, for appropriate choices of the nonlinearities we can find time-independent linear potentials that enable the construction of solutions that undergo a prescribed growth in the Sobolev norms.  
The solutions that we provide follow closely the orbits of a nonlinear resonant model, which is a good approximation of the full equation. The non-convexity of the resonant Hamiltonian allows the existence of \emph{fast diffusion channels} along which the orbits of the resonant model experience a large drift in the actions in the optimal time. This phenomenon induces a transfer of energy among the Fourier modes of the solutions, which in turn is responsible for the growth of higher order Sobolev norms.
\end{abstract}

\tableofcontents

\section{Introduction}

We consider the following resonant Hamiltonian PDEs under periodic boundary conditions:
\begin{itemize}
\item Nonlinear wave equations with even-power nonlinearity
\begin{equation}\label{eqw}
u_{tt}-\Delta u+V_p * u+ u^p=0, \quad u=u(t, x), \quad t\in\mathbb{R}, \quad x\in\mathbb{T}:=\mathbb{R}/2\pi \mathbb{Z}
\end{equation}
with $p$ even and large enough.
\item Nonlinear Schr\"odinger equations with cubic $x$-dependent nonlinearity
\begin{equation}\label{eqn}
\mathrm{i} u_t-\Delta u+V_N*u+\cos(N x) |u|^2 u=0, \quad  u=u(t, x), \quad t\in\mathbb{R}, \quad x\in\mathbb{T}:=\mathbb{R}/2\pi \mathbb{Z}
\end{equation}
with $N\geq 1$.
\end{itemize} 
We show that, prescribed an arbitrarily large (but finite) growth, for an opportune choice of the nonlinear terms and \emph{time-independent} potentials $V_p(x), V_N(x)$ (supported on few harmonics) we are able to construct arbitrarily small initial data solutions of the equations \ref{eqw}, \ref{eqn} whose Sobolev norms undergo the prescribed growth (we refer to Section \ref{secmainres} for the precise statements of the results).
 We point out that this phenomenon is purely nonlinear, since the potentials $V_p(x)$, $V_N(x)$ are chosen such that the origin $u=0$ is a resonant \emph{elliptic} fixed point - all the linear solutions are time-periodic or quasi-periodic functions that do not exchange energy among their modes.\\ The construction of solutions whose norms exhibit growth relies on the resonant nature of the equilibrium and the degeneracy of the first order resonant Hamiltonian.\\
We remark that in the linear setting one can obtain stronger results, such as the existence of unbounded orbits, by adding smooth time-dependent potentials to resonant equations, see for instance \cite{Maspero18g}.

\smallskip

In a neighborhood of a resonant elliptic equilibrium the analysis of the nonlinear dynamics can be performed through Birkhoff normal form methods. We can find a change of coordinates that removes from the Hamiltonian the terms which are not relevant for the dynamics of the equations in a certain range of times. In these coordinates the Hamiltonian is said to be in normal form.\\ 
We construct solutions which are close to orbits of a finite-dimensional \emph{resonant model}, which is obtained by restricting the Hamiltonian in normal form on an opportune submainfold of the phase space. The non-convexity of such Hamiltonian allows the existence of certain affine subspaces of the action space, called \emph{diffusion channels}. The orbits that travel on these channels are locked in a resonance and exhibit a relevant drift in (some of) the actions in the optimal time.\\
 A fast instability phenomenon of this kind is illustrated by the following example: consider the two-degrees of freedom Hamiltonian system 
\begin{equation}\label{example}
H(\theta_1, \theta_2, I_1, I_2)=I_1+\varepsilon \cos(\theta_2), \quad \theta_i\in\mathbb{T}, \,\,I_i\in(0, r), \,\,i=1, 2,
\end{equation}
for some $r>0$.
When $\varepsilon=0$ the phase space is foliated by $2$-dimensional invariant resonant tori filled by periodic orbits. Hence there is stability in the actions for all time.
For $\varepsilon>0$ the equations of motion are
\[
\dot{\theta}_1=1, \quad \dot{\theta_2}=0, \quad \dot{I}_1=0, \quad \dot{I}_2=\varepsilon \sin (\theta_2).
\]
We note that any section $\{\theta_2=\alpha\}$, $\alpha\in [0, 2\pi)$ is invariant. If we choose as initial conditions $\theta_2(0)=\alpha$ with $\alpha\notin \{0, \pi\}$ it is easy to see that $I_2(t)$ experiences a drift of order $\mathcal{O}(1)$ in a time $T=\mathcal{O}( \varepsilon^{-1})$. Thus all the periodic orbits on a given unperturbed torus, except two, are destroyed and give rise to diffusive solutions.\\
The first examples of finite-dimensional nearly-integrable Hamiltonian systems exhibiting fast instability have been provided by Nekhoroshev \cite{Nekhoroshev77}. The analysis of such systems has been carried out by Biasco-Chierchia-Treschev \cite{BCT2006}, Bounemoura-Kaloshin \cite{KalBoune} and Bounemoura \cite{Boune}. In these works the authors consider two degrees of freedom systems with an unperturbed Hamiltonian that violate the Nekhoroshev's condition for stability (see  \cite{Nekhoroshev77}) and study generic properties of the perturbations that provide fast diffusion. In the present paper we are interested in studying how these fast instability phenomena may be exploited, in the context of Hamiltonian PDEs under periodic boundary conditions, to obtain different ways to transfer energy among Fourier modes of nonlinear wave solutions.\\
The dynamics of the linearized problem at a resonant elliptic equilibrium is in some way similar to the unperturbed dynamics of the Hamiltonian in \ref{example}: there is plenty of resonant invariant tori supporting motions that fill densely lower dimensional submanifolds. As observed by Poincar\'e, these invariant objects are usually not robust, even under small perturbations. Then we may expect that, under some degeneracy assumptions on the Hamiltonian, some of them may be partially destroyed under the effect of the nonlinear terms and accomodate unstable behaviors.
 As a counterpart we mention that, under assumptions of integrability and non-degeneracy of the normal form Hamiltonian,
the existence of invariant tori very close to resonances, see for instance \cite{KdVAut}, \cite{FGP}, and results of long-time stability \cite{Bambusi99b}, \cite{BambusiNek} have been provided for completely resonant PDEs on tori.

\smallskip

A drift in the actions in an opportune symplectic reduction of the resonant model corresponds to a transfer of energy between resonant modes. 
An arbitrarily large growth of Sobolev norms can be obtained if this transition occurs across increasingly high Fourier modes.\\ 
In this paper we just consider solutions that display an exchange of energy among few modes.
Similar analysis for the dynamics of single resonant clusters have been recently carried out for the search of time-recurrent exchanges of energy, usually called \emph{beatings}, see for instance \cite{GT}, \cite{HausP17}. 
In the present paper we consider "one-way" transfers of energy between two sets of modes, say from \emph{low} to \emph{high} modes (see the end of Section \ref{secmainres} for a comparison with periodic beatings).
Roughly speaking, the nonlinearities play the role of \emph{external parameters} that modulate the size of the high modes. This allows to obtain the desired growth in the Sobolev norms by choosing appropriately the nonlinear terms. Since the construction of the solutions that we provide is relatively simple we are able to give sharp bounds for the diffusion time.  

\smallskip

In the last decades lots of effort has been put to obtain lower bounds of Sobolev norms for solutions of nonlinear Hamiltonian PDEs on compact manifolds. The first works in this direction are due to Bourgain \cite{Bou95},  \cite{Bou96} and Kuksin \cite{Kuksin97b}. In $2010$ Colliander-Keel-Staffilani-Takaoka-Tao (I-team) proved an outstanding result \cite{CKSTT} concerning the $H^s$-instability of the origin of the cubic nonlinear Schr\"odinger equation on $\mathbb{T}^2$ with $s>1$ \footnote{We remark that the energy provides a complete control of the $H^1$-norm.}. The construction of the unstable solutions is inspired by Arnold diffusion techniques: it relies on the presence of orbits of a finite dimensional good approximation of the equation (called \emph{toy model}) that shadow a chain of invariant hyperbolic manifolds (periodic orbits in a suitable symplectic reduction). After this work many papers have been devoted to the analysis and extensions of that scheme, exclusively for NLS models (concerning results on other models, using a different approach, we cite the seminal works by G\'erard-Grellier \cite{GerardG10}, \cite{GerardG11} on the Szeg\"o equation on the circle).
In \cite{GuardiaK12} Guardia-Kaloshin provide estimates for the diffusion time of solutions obtained by refining the analysis of the dynamics of the toy model,  Guardia-Haus-Procesi \cite{GuardiaHP16} extended the result of the I-team to Schr\"odinger equations with all types of analytic nonlinearities. The study of the $H^s$-instability of different invariant objects rather than elliptic fixed points have been carried out just recently. 
We mention the paper \cite{Hani12} by Hani for a proof of $H^s$-instability of plane waves and Guardia-Hani-Haus-Maspero-Procesi \cite{GuardiaHHMP19} for the case of finite gap solutions for $s\in (0, 1)$. 
We point out that the I-team mechanism strongly relies on the fact that the Schr\"odinger equation possess several symmetries. In particular the dynamics of the resonant models considered in the aforementioned papers is rather special and far from being generic \footnote{One can find arbitrarily long sequences of invariant manifolds on the same energy level. Moreover the heteroclinic connections between these manifolds are not transversal, in any possible sense.}. For this reason it seems hard to implement a similar strategy for different Hamiltonian PDEs.
Then it is natural to look for alternative mechanisms and we believe that a first step in this direction should be to find out different ways of transferring energy between modes, even restricting the study to single resonant clusters. About this, we mention the very recent result \cite{GGMP} by Guardia-Martin-Pasquali and the author of the present paper concerning chaotic-like transfers of energy for the wave, beam and Hartree equations on $\mathbb{T}^2$.

\section{Main results}\label{secmainres}
Let $s>0$, we introduce the Sobolev spaces
\[
H^s:=\left\{ u\in L^2(\mathbb{T}) : u=\sum_{j\in\mathbb{Z}} u_j\,e^{\mathrm{i} j x}\,,\,\,|| u ||_s^2:=\sum_{j\in\mathbb{Z}} | u_j |^2\,\langle j \rangle^{2 s} \right\} \]
where $\langle j \rangle:=\max\{ 1, |j| \}$.
We define the following norm {
\[
|| u ||_{E^s}:=|| u ||_{s+1/2}+|| u_t ||_{s-1/2}.
\]
}
\subsection*{Notations} When we say that a parameter $p$ is large (small) enough, or we write $p\gg1$ ($p\ll 1$), we mean that there exists a universal constant $p_0$ large (small enough) such that $p\geq p_0$ ($p\le p_0$).\\
The notation $p\lesssim q$ denotes that there exists a pure constant $C>0$ such that $p\le C q$.  The notation $p=\mathcal{O}(q)$ denotes that there are two pure constants $0<C_1< C_2$ such that $C_1 q\le p\le C_2 q$.

\medskip

The first result regards a class of nonlinear wave equations with even-power nonlinearities.

\begin{theorem}\label{thmwave}
{Let $s>2$}. Given $\delta\ll 1$ and $\mathcal{C}\gg 1$ there exists $p_0=p_0(s,\delta, \mathcal{C})$ such that for all \textbf{even} numbers $p\geq p_0$ the equation
\begin{equation}\label{wave}
u_{tt}-\Delta u+V_p*u+u^p=0,
\end{equation}
where
\begin{equation}\label{coeffpot}
V_p(x):=1+\cos(x)+p^2 \cos(p x),
\end{equation}
possesses a solution $u(t, x)$ such that $|| u(0) ||_{E^s}\le \delta$ and
\begin{equation}\label{factorgrowthC}
 \frac{|| u(T) ||_{E^s}}{|| u(0) ||_{E^s}}\geq \mathcal{C}
\end{equation}
with
\begin{equation}\label{timeteo}
T=\mathcal{O}( 2^p\,p^{-1}\,\,|| u(0) ||_{E^s}^{1- p}).
\end{equation}
\end{theorem}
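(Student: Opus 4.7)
My plan is to exploit the $(p+1)$-body resonance built into \eqref{wave} by the choice of the potential \eqref{coeffpot}. The Fourier coefficients $\hat V_p(0)=1$, $\hat V_p(\pm 1)=1/2$, $\hat V_p(\pm p)=p^2/2$ produce linear frequencies $\omega_j=\sqrt{j^2+\hat V_p(j)}$ with $\omega_{\pm 1}=\sqrt{3/2}$ and $\omega_{\pm p}=p\omega_1$, so that $\omega_p-p\omega_1=0$ is the only non-trivial low-order resonance among the modes in $\{\pm 1,\pm p\}$. I would first recast \eqref{wave} in Hamiltonian form, diagonalize the quadratic part with complex symplectic coordinates $(a_j,\bar a_j)$, and perform a Birkhoff normal form of order $p+1$ on a ball of radius $\delta$ in a phase space equivalent to the $E^s$ norm. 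After this transformation, the only non-integrable monomial of order $p+1$ coupling the relevant modes is $c(\bar a_p a_1^p+a_p\bar a_1^p)$ with a coupling constant $c$ computable explicitly from a Fourier integral and of order $2^{-p}$ up to polynomial factors in $p$.

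Restricting the normal form to $\{\pm 1,\pm p\}$ and discarding the Birkhoff remainder produces a finite-dimensional \emph{toy resonant model}. Using the conserved quantity $|a_1|^2+p|a_p|^2$ (which encodes the momentum conservation together with the resonance) and passing to action-angle coordinates adapted to the resonant angle $\phi:=\theta_p-p\theta_1$, the reduced Hamiltonian is precisely of the non-convex form in \eqref{example}: the integrable part is \emph{linear} in the new actions and the perturbation is a single cosine in $\phi$. The equation for $a_p$ is, to leading order on $[0,T]$, the forced linear ODE $\dot a_p+i\omega_p a_p=-ic\,a_1(0)^p e^{-i\omega_p t}$, whose solution grows as $|a_p(t)|\sim|c|\,\delta^p\,t$ along the diffusion channel $\{\phi(0)\notin\{0,\pi\}\}$. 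Using the equivalence $\|u\|_{E^s}^2\sim \sum_j|a_j|^2\langle j\rangle^{2s}$, the inequality $\|u(T)\|_{E^s}\ge \mathcal{C}\|u(0)\|_{E^s}$ holds once $|a_p(T)|\gtrsim \mathcal{C}\delta p^{-s}$, which happens at the stated time $T=\mathcal{O}(2^p p^{-1}\delta^{1-p})$ provided $p$ is taken larger than a threshold $p_0\sim \mathcal{C}^{1/s}$.

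The main technical obstacle is the shadowing step: a true solution of \eqref{wave} launched on this diffusion channel must stay close to the toy-model trajectory over the exponentially long interval $[0,T]\sim [0,2^p]$. This requires (i) a quantitative estimate making the Birkhoff map a smooth near-identity diffeomorphism of size $\mathcal{O}(\delta)$ on a ball of radius $2\delta$; (ii) a tame bound showing that the Birkhoff remainder generates a vector field of size $\mathcal{O}(\delta^{p+2})$ in the $E^s$ metric, together with control on the coupling to the modes $|j|\notin\{1,p\}$ removed by the truncation; and (iii) a Gronwall argument on $[0,T]$ in which the total accumulated error $T\cdot\delta^{p+2}$ is kept below the target increment, which is arranged by taking $\delta$ small depending on $p$, $s$ and $\mathcal{C}$ (this is compatible with the order of the quantifiers in the statement, where $p_0=p_0(s,\delta,\mathcal{C})$). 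The delicate part is the balance of the two exponential scales — the small coupling $c\sim 2^{-p}$ and the long diffusion time $T\sim 2^p$ — and pushing this balance through the non-resonant remainder is where I expect the bulk of the proof effort to lie.
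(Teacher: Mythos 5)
Your overall architecture (Hamiltonian reformulation, order-$(p+1)$ Birkhoff normal form exploiting the $\omega_p=p\omega_1$ resonance, finite-dimensional resonant model with a non-convex reduced Hamiltonian, shadowing by a Gronwall estimate) is the same as the paper's. However there are two substantive gaps and one quantitative slip.

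First, your analysis of the resonant model is a heuristic, not a proof. You keep $a_1$ frozen and solve the forced linear ODE $\dot a_p\approx -\mathrm{i} c\,a_1(0)^p$, claiming linear growth along the channel $\{\phi(0)\notin\{0,\pi\}\}$. The paper instead works on the \emph{invariant section} $\{\varphi_p=\pi/2\}$ where the resonant Hamiltonian vanishes identically and the remaining dynamics is the autonomous one-dimensional ODE $\dot J_p=\lambda(J_1-pJ_p)^{p/2}\sqrt{J_p}$ with $J_1$ conserved (Lemma \ref{sistdin}). This gives a monotone drift with an \emph{explicit} travel time by integrating $dJ_p/f(J_p)$, and it pins down the endpoint $I_p(T_0)=\mathtt{c}/p^2$ — not $I_p(T_0)\sim\mathtt{c}$, which your linearization would naively suggest can be reached. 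That lost factor $p^{-1}$ in $|z_p(T)|$ is exactly what makes the final threshold $p^{s-2}\geq 2\sqrt{2}\,\mathcal{C}$ (i.e.\ $p_0\sim\mathcal{C}^{1/(s-2)}$), not $p_0\sim\mathcal{C}^{1/s}$ as you claim. Your heuristic also does not address that for generic $\phi(0)\in(0,\pi)\setminus\{\pi/2\}$ the angle $\varphi_p$ moves and the forcing dephases; fixing the single invariant section is what makes the growth monotone and computable.

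Second, your error budget \lq\lq$T\cdot\delta^{p+2}$ kept below the target increment\rq\rq\ underestimates what the shadowing argument requires. The Gronwall estimate produces an exponential amplification $\exp(\Xi\,\mu^{-(p-1)}t)$ where $\Xi\sim \sqrt{2}^{3p}p^2p!\,\sqrt{\mathtt{c}}^{p-1}$ is the Lipschitz constant of the resonant vector field; evaluated at $t=T\sim\mu^{p-1}T_0$ this gives a factor $\exp(\Xi T_0)$ which is enormous (of size $\exp(2^p/p)$) and \emph{independent of the rescaling} $\mu$. Beating it requires the initial closeness $\mu^{-\sigma_1}$ to overpower this constant, which the paper arranges by taking $\mu=p^{\mathtt{a}}\gamma^{-\mathtt{b}}$ with $\mathtt{a}\geq p\,p!\,4^{p+1}$ — i.e.\ $\log\mu\gtrsim 4^p p\,p!\log p$, a much stronger smallness of the initial datum than a power of $\delta$. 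This is the \lq\lq balance of exponential scales\rq\rq\ you anticipated, but the resolution is not \lq\lq take $\delta$ small depending on $p$\rq\rq: in the statement $\delta$ is fixed first and $p_0=p_0(s,\delta,\mathcal{C})$ comes second. The paper's mechanism is that the constructed datum has size $\sim\sqrt{\mathtt{c}}\,\mu^{-1}$ which decreases super-exponentially as $p\to\infty$, so for any given $\delta$ a sufficiently large $p$ makes the datum $\leq\delta$, preserving the quantifier order.

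To complete your proposal you would need to (i) replace the forced-ODE heuristic with an analysis of the autonomous ODE on the invariant section, giving the correct endpoint $\mathtt{c}/p^2$ and two-sided time bounds independent of $\varepsilon=I_p(0)$ (Remark \ref{remarkfast} is what guarantees the time estimate \ref{timeteo}); and (ii) quantify the Gronwall factor, recognize that it is $\mu$-independent, and choose $\mu(p)$ so that $\mu^{-\sigma_1}$ beats it — which in particular shows why the scaling is necessarily a tower of exponentials in $p$.
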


The above theorem ensures the existence of a solution with arbitrarily small initial datum and arbitrarily large growth $\mathcal{C}$, but it does not guarantee that the norm at time $T$ is arbitrarily large. However this can be obtained for sufficiently higher order Sobolev norms.  

\begin{theorem}\label{stronginstability}
Given $\delta\ll 1$ and $K\gg 1$ there exists $p=p(\delta)$ such that the following holds.\\
There exists $s_0=s_0(p, K)$ such that the equation \ref{wave}
where $V_p$ as in \ref{coeffpot},
possesses a solution $u(t, x)$ such that 
\begin{equation}\label{strongb}
|| u(0) ||_{E^s}\le \delta, \qquad || u(T) ||_{E^s}\geq K \qquad \forall s\geq s_0,
\end{equation}
with $T$ as in \ref{timeteo}. Moreover if $K=\delta^{-\alpha}$ with $\alpha>1$ we can provide the following polynomial time estimate with respect to the growth
\begin{equation}\label{timeC}
T\lesssim \mathcal{C}^{p-1}, \qquad \mathcal{C}:=\frac{K}{\delta}.
\end{equation}

\end{theorem}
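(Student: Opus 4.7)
The plan is to reuse the construction of Theorem \ref{thmwave} but to analyze the detailed Fourier profile of the solution it produces, rather than only the $E^s$-norm ratio \ref{factorgrowthC}. The diffusion channel of the resonant model underlying Theorem \ref{thmwave} shuttles the $L^2$-mass of the orbit from Fourier modes at frequency $\sim 1$ at time $0$ to Fourier modes at frequency $\sim p$ at time $T$, with amplitudes comparable to $\delta$. Since $\langle \pm 1\rangle^{s\pm 1/2}=1$, the initial datum satisfies $\|u(0)\|_{E^s}\lesssim \delta$ \emph{uniformly in $s\ge 0$}, while the final state concentrated near mode $p$ satisfies
\[
\|u(T)\|_{E^s}\gtrsim |u_p(T)|\,\langle p\rangle^{s+1/2}\gtrsim \delta\,p^{s+1/2}.
\]
The key point is that the Sobolev exponent $s$ enters only in the evaluation of the norm, not in the construction of the orbit, so by enlarging $s$ one turns the geometric factor $p^{s+1/2}$ into an \emph{absolute} norm that can be made as large as desired.

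Concretely, I would first fix $\delta\ll 1$ and select $p=p(\delta)$ large enough that the normal form and the resonant-model approximation of Theorem \ref{thmwave} are valid at some fixed reference Sobolev index (say $s=3$). This produces a genuine solution of \ref{wave}--\ref{coeffpot} whose Fourier support is essentially concentrated on $\{\pm 1\}$ at $t=0$ and on $\{\pm p\}$ at $t=T$, up to normal-form corrections of size much smaller than $\delta$. I would then define
\[
s_0(p,K):=\frac{\log(K/\delta)}{\log p},
\]
so that for every $s\ge s_0$ the lower bound above gives $\|u(T)\|_{E^s}\ge K$, yielding \ref{strongb}.

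For the polynomial time estimate \ref{timeC}, substitute $\|u(0)\|_{E^s}\sim\delta$ in \ref{timeteo} to obtain $T=\mathcal{O}(2^p\,p^{-1}\,\delta^{1-p})$; when $K=\delta^{-\alpha}$ with $\alpha>1$ and $\mathcal{C}=K/\delta=\delta^{-(\alpha+1)}$, the ratio
\[
\frac{T}{\mathcal{C}^{p-1}}\sim \frac{2^p}{p}\,\delta^{\alpha(p-1)}
\]
is bounded once $\delta$ is small enough depending on $p$ and $\alpha$, which gives \ref{timeC}. The hypothesis $\alpha>1$ ensures a convenient margin in the exponent.

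The main obstacle is reconciling the parameter dependencies: Theorem \ref{thmwave} fixes $s$ before $p$, whereas Theorem \ref{stronginstability} needs $p$ chosen before $s$. Overcoming this amounts to inspecting the proof of Theorem \ref{thmwave} and checking that the construction of the resonant-model orbit, together with its approximation by a true solution of \ref{wave}, depends only on $\delta$ and $p$, with $s$ entering only in the final step where the Fourier coefficients are weighted by $\langle j\rangle^{s\pm 1/2}$. A secondary technical point is to exclude mass leakage from mode $p$ to neighbouring harmonics that could spoil the lower bound $|u_p(T)|\gtrsim\delta$; this should be controlled by the same estimates on the normal-form remainder used in proving Theorem \ref{thmwave}, and is natural in the regime $p\gg 1$, $\delta\ll 1$.
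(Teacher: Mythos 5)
Your overall strategy is the one the paper actually uses: decouple the Sobolev index $s$ from the construction of the orbit, observe that the initial datum is concentrated on $\pm 1$ so its $E^s$-norm is $s$-independent, while the final state charges $\pm p$ and picks up a Sobolev weight $\sim p^s$, and then tune $s_0$ so that $p^{s_0}$ dominates $K/\delta$. Your formula $s_0\sim\log(K/\delta)/\log p$ is essentially the paper's \ref{pK}, and your treatment of the time bound \ref{timeC} --- plug $\|u(0)\|_{E^s}\sim\delta$ into \ref{timeteo} and cancel the $\delta^{\alpha(p-1)}$ factor --- coincides with the paper's argument.

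There is, however, a genuine gap at the crucial step $\|u(T)\|_{E^s}\gtrsim|u_p(T)|\,p^{s+1/2}\gtrsim\delta\,p^{s+1/2}$. Theorem \ref{thmwave}, used as a black box, only provides the \emph{upper} bound $\|u(0)\|_{E^s}\le\delta$; inside its proof the overall amplitude is $\mu^{-1}$ with $\mu=p^{\mathtt{a}}\gamma^{-\mathtt{b}}$ and $\mathtt{a}\sim p\,p!\,4^p$, so $\mu^{-1}$ decreases \emph{super-exponentially} in $p$. Choosing $p$ merely ``large enough'' therefore drives the amplitude, and hence $|u_p(T)|\sim\sqrt{\tc}\,\mu^{-1}p^{-1}$, far below $\delta$, and the inequality $|u_p(T)|\gtrsim\delta$ fails. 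What is actually needed --- and what the paper adds on top of Theorem \ref{thmwave} --- is the matching \emph{lower} bound $\|u(0)\|_{E^s}\ge\delta/2$, enforced via condition \ref{lowb} ($F_\gamma(p)\ge\delta/(2\sqrt2)$), which bounds $p$ from \emph{above} in terms of $\delta$. Thus $p$ must be taken in a bounded window $C_1(\gamma,\delta)\le p\le C_2(\gamma,\delta)$, not simply ``large enough'', and one must check that this window is nonempty and above the threshold $p_0$ for the normal form. Once that is in place the rest of your argument goes through as you describe. You should also note that the choice of $\varepsilon$ (the initial energy in the $\pm p$ modes) must be made small compared to $p^{-2s}$ to keep $\|u(0)\|_{E^s}\le\delta$; here it is essential --- as you correctly observe --- that the drift time of the resonant channel has upper and lower bounds independent of $\varepsilon$ (Remark \ref{remarkfast}), otherwise the $\log\varepsilon$-dependence of a beating-type mechanism would spoil the $s$-independence of $T$.
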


\noindent Some comments are in order:
\begin{itemize}
\item  The nonlinearity in equation \ref{wave} can be replaced by any analytic function $f(u)$ with a zero of order $p$ at the origin $u=0$. Moreover the sign of the nonlinearity does not play any role.\\
The degree $p$ is used as a parameter to deal with increasingly high order resonances in the first step of a Birkhoff normal form procedure. More precisely, it turns out that the modes $\pm 1$ (low modes) and $\pm p$ (high modes) are in resonance.
\item The degeneracy of the resonant Hamiltonian of the nonlinear term is due to the evenness of the degree $p$.  An evidence of the unstable behavior of wave equations with even-power nonlinearities on $\mathbb{T}$ is given, for instance, by the result of non-existence of periodic solutions given in \cite{BertiPro}. 
\item Although the results are rather different, it is interesting to notice that the diffusion time for arbitrarily small initial data solutions obtained through the I-team mechanism is super-exponential with respect to the growth, $\exp(\mathcal{C}^{k})$ for some $k>1$ (see for instance \cite{GuardiaHP16}), while \ref{timeC} is a polynomial bound.    
\item We remark that the growth of $H^s$-Sobolev norms with $s\geq s_0$ for small data solutions obtained in Theorem \ref{stronginstability} is not trivial, especially because the diffusion time does not increase with the index $s$.\\ 
A result of this kind can be achieved if there is a transfer of energy and the norm of the initial datum remains small when $s$ increases. For the case taken into account this happens because: (i) the Fourier support of the initial datum includes the modes $\pm 1$, whose Sobolev weights are the same for all $s$, and (ii) the time of diffusion has lower and upper bounds that do not depend on the initial amount of energy of the high modes $\pm p$. The latter fact is due to the mechanism we are considering, which relies on the existence of diffusion channels (see Remark \ref{remarkfast}). 
\end{itemize}

The next theorem concerns cubic NLS equations which are not $x$-translation invariant. A similar model has been considered in \cite{GrebertV11} for the search of time-periodic beating solutions.\begin{theorem}\label{thmnls}
Let $s>0$. Given $\delta\ll 1$ and $K\gg 1$ there exists $N_0=N_0(s, \delta, K)$ such that 
 for all $N\geq N_0$ there exists a trigonometric polynomial $V_N\colon \mathbb{T}\to \mathbb{R}$ with real Fourier coefficients supported on $4$ modes associated to wavenumbers $k_1, k_2, k_3$ and $k_4$, where $| k_1 |, | k_2 |, | k_3|\lesssim \sqrt{N}$ and $|k_4|=\mathcal{O}(N)$, such that the following holds:\\
 The equation
\begin{equation}\label{nls}
\mathrm{i} u_{t}-\Delta u+V_N* u+ \cos(N x) |u|^2 u=0
\end{equation}
possesses a solution $u(t, x)$
such that
\begin{equation*}\label{growthwave}
|| u(0) ||_{s}\le \delta, \quad || u(T) ||_{s}\geq K
\end{equation*}
with
\begin{equation}\label{timecondiniziale}
T=\mathcal{O}(N^{ s}\,\,|| u(0) ||_{s}^{-2}).
\end{equation}
Moreover if $K=\delta^{-\alpha}$ for some $\alpha>1$ we can provide the following polynomial time estimate with respect to the growth 
\begin{equation}\label{timeC2}
T\lesssim \mathcal{C}^6, \qquad \mathcal{C}:=\frac{K}{\delta}.
\end{equation}
\end{theorem}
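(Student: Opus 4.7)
The plan is to mirror the strategy used for the wave equation (Theorems \ref{thmwave}, \ref{stronginstability}), adapting it to the cubic NLS with an $x$-dependent nonlinearity. The $\cos(Nx)$ factor modifies the momentum conservation in the 4-wave interaction from $j_1-j_2+j_3-j_4=0$ to $j_1-j_2+j_3-j_4=\pm N$, which permits isolated resonant quartets containing three ``low'' modes (of size $\lesssim\sqrt{N}$) together with one ``high'' mode of size $\mathcal{O}(N)$.

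First, I would select a quartet $(n_1,n_2,n_3,n_4)$ with $|n_1|,|n_2|,|n_3|\lesssim\sqrt{N}$, $n_4=\mathcal{O}(N)$ and $n_1-n_2+n_3-n_4=\pm N$. Next I would design the potential $V_N$, supported on the $4$ wavenumbers $k_i$ in the statement, so that the dressed linear frequencies $\omega_j=j^2+\widehat{V}_N(j)$ satisfy the exact resonance
\begin{equation*}
\omega_{n_1}-\omega_{n_2}+\omega_{n_3}-\omega_{n_4}=0,
\end{equation*}
and so that no spurious quartet intersecting the set $\{n_1,n_2,n_3,n_4\}$ is simultaneously resonant. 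Since $V_N$ only affects the linear spectrum at its $4$ Fourier support points, this is a codimension-one constraint that I can satisfy by a judicious choice of the coefficients (taking $N_0$ large to separate scales).

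Having isolated the resonance, I would perform a first-order Birkhoff normal form for the cubic NLS \ref{nls}: one symplectic transformation removes all non-resonant quartic interactions, leaving a resonant cubic NLS Hamiltonian plus a remainder of order $\|u\|^5$. Restricting the resonant Hamiltonian to the invariant finite-dimensional submanifold supported on $\{n_1,n_2,n_3,n_4\}$ yields the \emph{resonant model}. Its structure is that of a $4$-mode Hamiltonian with two obvious conserved quantities (mass, and shifted momentum), hence after symplectic reduction one effectively deals with a low-dimensional system. Because the quartic interaction is non-convex in the actions, I expect (in direct analogy with the two d.o.f.~example \ref{example} in the introduction) the existence of \emph{diffusion channels}: invariant affine sections of the action space along which the reduced dynamics produces an $\mathcal{O}(1)$ drift transferring energy from the low modes into the high mode $n_4$. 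This gives an explicit orbit along which almost all the $L^2$-mass concentrated initially at the low modes migrates to mode $n_4$ in time of order $\|u(0)\|_{L^2}^{-2}$.

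Finally I would run a bootstrap/shadowing argument showing that the true solution of \ref{nls} with initial datum given by the resonant-model orbit stays $o(1)$ close (in $\ell^{2,s}$) to that orbit for the whole diffusion time $T$. The two error sources are (i) the non-resonant remainder from the Birkhoff step, which acts on a time scale much longer than $T$, and (ii) the leakage outside the quartet, which I control using the isolatedness of the resonance obtained in the first step. Counting powers: an initial datum with amplitude $\delta_0$ supported on modes of size $\sqrt{N}$ has $\|u(0)\|_{s}\asymp \delta_0 N^{s/2}$, while after the transfer $\|u(T)\|_{s}\asymp \delta_0 N^{s}$; choosing $N$ large produces the growth factor $N^{s/2}\geq K/\delta$, and the diffusion time $T\sim \delta_0^{-2}=N^{s}\|u(0)\|_s^{-2}$ agrees with \ref{timecondiniziale}. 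The polynomial bound \ref{timeC2} then follows by eliminating $N$ in favour of $\mathcal{C}=K/\delta$. The main obstacle I anticipate is step (i)–(ii) of the bootstrap combined with the construction of the diffusion channel: one must verify that the non-convex reduced Hamiltonian really does admit an orbit that executes the $\mathcal{O}(1)$ drift in the actions, and do so while keeping the Fourier support effectively contained in the quartet for the entire time $T$.
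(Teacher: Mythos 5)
Your proposal follows the same broad strategy as the paper (quartet selection via the shifted momentum constraint, potential-dressed resonance, Birkhoff reduction, diffusion channel in the resonant model, Gronwall/bootstrap shadowing, power counting in $N$), but two technical points need to be sharpened for the argument to actually close.

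First, the shadowing norm: you propose to keep the true solution ``$o(1)$ close in $\ell^{2,s}$'' to the resonant-model orbit. This cannot be the right norm for the bootstrap, for two reasons. The $H^s$-norm of the resonant orbit itself grows by the factor $N^{s/2}$ over $[0,T]$, so closeness in $\ell^{2,s}$ is not a stable statement; more importantly, the estimates for the Birkhoff remainder and for the $H^{(4,\geq 2)}$ tail rely on an algebra property of the ambient norm under convolution, which $\ell^{2,s}$ does not supply in a way that makes the vector-field bounds $\|X_R\|\lesssim\gamma^{-1}\eta^5+\gamma^{-2}\eta^7$ clean. The paper instead works in $\ell^1$ (Definition \ref{def}, Lemma \ref{lemmaGT}, Propositions \ref{wbnfnls}, \ref{approxarg2}): one proves $\ell^1$-closeness over $[0,T]$, which is uniformly of size $O(\mu^{-1}\sqrt{\tc})$, and only at the very end recovers the lower bound on $\|z(T)\|_s$ by extracting the single Fourier coefficient $z_{k_4}(T)$ from the $\ell^1$-close approximation. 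Second, the isolation of the resonance is not just the codimension-one condition $\omega_{n_1}-\omega_{n_2}+\omega_{n_3}-\omega_{n_4}=0$ plus a ``judicious'' perturbation to kill spurious quartets. What the paper actually does is dress the four tangential frequencies to be Diophantine \emph{irrationals} in $[1,2]$ (with $\omega(k_4)=\mathtt{q}_1-\mathtt{q}_2+\mathtt{q}_3$ making the target quartet exactly resonant), while every normal frequency remains the \emph{integer} $j^2$; then any mixed divisor $\Omega(\alpha,\beta)=\mathtt{q}\cdot\ell\pm j^2$ with at least one normal mode is bounded below by $\gamma\langle\ell\rangle^{-\tau}$ via \ref{irraz}, which is exactly what is needed for $\Pi_{\mathrm{Ker}}H^{(4,1)}=0$ and for the small-divisor bounds in the normal form. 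Without this irrational-tangential-versus-integer-normal structure, showing that every mixed quartet is non-resonant uniformly in $N$ is not a finite-codimension constraint but a genuine Diophantine condition, and your proposal leaves it unaddressed.

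Aside from these two points the rest of your outline (the diffusion channel inside the mass level, the scaling $\mu=N^{3s/4}$, the balance $\|u(0)\|_s\sim\mu^{-1}N^{s/2}$, $\|u(T)\|_s\sim\mu^{-1}N^{s}$ giving growth $N^{s/2}$, and $T\sim\mu^2\tc^{-1}\sim N^{s}\|u(0)\|_s^{-2}$, then eliminating $N$ to get $T\lesssim\mathcal{C}^6$) matches the paper's computation.
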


\noindent Some comments are in order:

\begin{itemize}
\item We remark that in dimension one the cubic NLS is completely integrable and, by the presence of infinitely many constants of motion, the Sobolev norms are controlled for all time. In the equation \ref{nls} the Hamiltonian structure and the mass (or the $L^2$-norm) are still preserved, but the nonlinear term breaks the momentum conservation. The frequency of the cosine $x$-function is used as a parameter to involve modes of very different size scale in the $4$-resonant interactions. Broadly speaking, the ratio between the size of the low and high modes turns out to be a power of $N$.
\item We remark that when $V_N=0$ the $H^1$-norm of solutions with small $L^2$-norm is controlled for all time. This can be seen by using that $\cos(N x)$ is uniformly bounded in $N$ and by applying the Gagliardo-Nirenberg inequality. When $V_N\neq 0$ the $H^1$-norm has still an upper bound for all time, but it is not uniform in $N$.
\end{itemize}

The convolution potentials $V_p$ in \ref{wave} and $V_N$ in \ref{nls} have the role to confine the dynamics of the normalized Hamiltonian on a finite dimensional submanifold, but they still preserve the resonant nature of the equations.
The solutions that we construct bifurcate from a periodic or quasi-periodic in time function $w(t, x)$ (see \ref{bif}, \ref{bif2}) that is obtained as a solution of the linearized problem at the origin by exciting a finite number of modes. The orbit $w(\cdot, x)$ fills densely a lower dimensional submanifold of an embedded resonant torus. The solutions $u(t, x)$ provided by the above theorems remain close to $w(t, x)$ in a weak norm for $t\in [0, T]$, but clearly not in the $H^s$-topology.

\medskip

The diffusion channels that we exploit are contained in the level sets of quadratic constants of motion, respectively the momentum and the mass for the wave equation \ref{wave} and the Schr\"odinger equation \ref{nls} (see \ref{fix} and \ref{massls}). The unstable solutions that we obtained come in one-parameter families, parametrized by the values of the aforementioned first integrals.

\subsubsection*{Comparison with beating solutions}\label{comparisonbeating}
To optimize the ratio between the Sobolev norms at time $t=0$ and at some other time $t=T$ we want to set the initial energy of the high modes almost at zero, say $\varepsilon$-small.
Thanks to the use of diffusion channels the exchanging time turns out to have an upper bound independent of $\varepsilon$ (see for example Lemma \ref{sistdin} and Remarks \ref{remarkfast}, \ref{rmkepsindip}). This is not the case if the same amount of energy is transferred among the modes of a periodic beating solution. These solutions
are usually obtained following periodic orbits of a resonant model which are very close to heteroclinic or homoclinic loops. Let us suppose for simplicity that the transfer of energy involves just two modes. Setting the initial energy of one of the modes almost at zero corresponds to consider an orbit with very large period that visits a small neighborhood $\mathcal{U}$ of a saddle point (or a hyperbolic manifold). If the size of $\mathcal{U}$ is $\mathcal{O}(\varepsilon)$ then the time spent to escape from it is $\mathcal{O}(\log(\varepsilon))$.\\ If one wants to prove that this kind of transfers of energy produces a growth of $H^s$-Sobolev norms for $s\gg 1$ then $\varepsilon$ has to be chosen such that the norm at initial time does not increase with $s$. Then $\varepsilon\sim C^{-s}$ for some constant $C>0$ (see for instance \ref{epschoice} and \ref{parameters2}) and the factor $\log(\varepsilon)$ makes the diffusion time explodes as $s\to \infty$.

\subsection{Scheme of the proofs} The proofs of Theorems \ref{thmwave}, \ref{thmnls} follow the same steps. Since the result on the nonlinear wave equations is more complicated we give full details for the proof of Theorem \ref{thmwave} and provide an outline of the proof for Theorem \ref{thmnls}.\\
Let us briefly describe the general strategy. First we introduce the Hamiltonian structure of equations \ref{wave} and \ref{nls}. Then we perform a Birkhoff normal form algorithm (see Propositions \ref{wbnf}, \ref{wbnfnls}), namely we provide a change of coordinates that remove some terms from the Hamiltonian that do not affect the dynamics in a neighborhood of the origin for a certain range of times. 
 The Hamiltonian in normal form turns out to possess finite-dimensional invariant subspaces. The restriction of the normalized system to such spaces defines our \emph{resonant model}. We analyze the dynamics of the resonant model by using action-angle variables \footnote{We refer to \cite{BMP} for the analysis of analogous finite-dimensional models.}. We construct an orbit that exhibit a certain drift in the actions in the optimal diffusion time (see Lemmata \ref{sistdin}, \ref{sistdinnls}). Thanks to a rescaling and Gronwall arguments we prove that there exists a solution of the full PDE that remains close (in a weak norm) to the unstable orbit of the resonant model for sufficiently long time (see Propositions \ref{approxarg}, \ref{approxarg2}). The last step consists in the proof of the bounds for the Sobolev norms of the solution at time $t=0$ and $t=T$, where $T$ is the rescaled diffusion time.

\section*{Acknowledgments} 
The idea of applying the theory of fast instability in the context of Hamiltonian PDEs has been suggested by V. Kaloshin to L. Biasco and M. Procesi some time ago. Recently L. B. and M. P. told me about that discussion and suggested me to read the papers \cite{BCT2006}, \cite{KalBoune} about diffusion channels. This has inspired the present work. So I would like to thank them all.\\
I am grateful also to M. Guardia, R. Scandone, E. Haus and R. Feola for many useful comments and discussions.\\
This paper has received funding from 
the European Research Council (ERC) 
under the European Union's Horizon 2020
research and innovation programme under grant agreement 
No 757802.

\section{Proof of Theorem \ref{thmwave}}\label{sec:wave}

\subsection{Hamiltonian structure}
In this section we introduce a useful set of coordinates and we discuss the Hamiltonian structure of the equation \ref{wave}. To simplify the notation we drop the subindex $p$ from the potential, namely we write $V_p=V$.
Let us denote by $\Lambda:=-\Delta+V*$. 
The wave equation \ref{wave} can be written as a first order system
\begin{equation*}\label{systemwave}
\begin{cases}
\dot{u}=v,\\
\dot{v}=-\Lambda u-u^{p},
\end{cases}
\end{equation*}
that, in the following complex coordinates {
\begin{equation}\label{complexcoord}
z^+:=\frac{\Lambda^{1/4}u-\mathrm{i} \Lambda^{-1/4} v}{\sqrt{2}}, \quad z^-:=\frac{\Lambda^{1/4}u+\mathrm{i} \Lambda^{-1/4} v}{\sqrt{2}},
\end{equation}
}
reads as
\begin{equation}\label{systemcomplex}
\begin{cases}
\mathrm{i} \dot{z}^+= -\Lambda^{1/2}  z^+-\mathtt{g}(z^+, z^-),\\
\mathrm{i} \dot{z}^-= \Lambda^{1/2}  z^-+\mathtt{g}(z^+, z^-),
\end{cases}
\end{equation}
with {
\[
\mathtt{g}(z^+, z^-):=\frac{1}{\sqrt{2}} \Lambda^{-1/4} \left( \Lambda^{-1/4} \left( \frac{z^++z^-}{\sqrt{2}} \right) \right)^{p}.
\]
}
\begin{remark}
We provide a solution of \ref{systemcomplex} that undergoes growth in the high Sobolev norms $|| \cdot ||_s$. Then it is easy to recover the same result for the norms $|| \cdot ||_{E^s}$ by undoing the change of coordinates \ref{complexcoord}.
\end{remark}
We introduce the infinitely many coordinates
\begin{equation*}\label{fouriercoord}
z^+=\sum_{j\in\mathbb{Z}} z_j^+ e^{\mathrm{i} j x}, \quad z^-=\sum_{j\in\mathbb{Z}} z_j^- e^{-\mathrm{i} j x}
\end{equation*}
that transform the system \ref{systemcomplex} in an infinite dimensional system of ODEs in the unknowns $(z_j^+, z_j^-)$, $j\in\mathbb{Z}$.
Such system is equipped with a Hamiltonian structure given by the symplectic form $-\mathrm{i}\sum_{j\in\mathbb{Z}} d z_j^+\wedge d z_j^-$, which in turn induces the Poisson structure
\begin{equation}\label{poisson}
\{ F, G\}:=-\mathrm{i} \sum_{j\in\mathbb{Z}} \big(\partial_{z_j^+} F\,\partial_{z_j^-} G-\partial_{z_j^-} F\,\partial_{z_j^+} G\big),
\end{equation}
where $F$ and $G$ are two real-valued functions defined on the phase space.
 The Hamiltonian of \ref{systemcomplex} is given by
\begin{equation}\label{hamwave}
\begin{aligned}
H(z_j^+, z_j^-)=&\sum_{j\in\mathbb{Z}} \omega(j) z_j^+ z_j^-\\
&{+ \frac{1}{(p+1)\sqrt{2}^{(p+1)}} \sum_{j_1+\dots+j_{p+1}=0} \frac{(z^+_{j_1}+z_{-j_1}^-)\dots (z^+_{j_{p+1}}+z^-_{j_{-(p+1)}})}{\sqrt{\omega(j_1)\dots \omega(j_{p+1})}},}
\end{aligned}
\end{equation}
where
\[
\omega(j):=\sqrt{j^2+V_j}, \quad j\in\mathbb{Z}
\]
are the linear frequencies of oscillation.
We shall look for a solution mainly Fourier supported on the symmetric \emph{tangential} set
\begin{equation*}\label{tangentialset}
S:=S^{+}\cup S^{-}, \quad S^{\pm}:=\{ \pm 1, \pm p \}.
\end{equation*}
By the choice of the convolution potential as in \ref{coeffpot} the linear frequencies of oscillation are given by
\begin{equation}\label{linearfreq}
\omega(j):=\begin{cases}
1 \quad \qquad \quad \quad \quad j=0,\\
\sqrt{2} |j| \qquad \qquad j\in S,\\
|j| \quad \quad \qquad j\notin S\cup\{0\}.
\end{cases}
\end{equation}
\begin{remark}
We could choose $V_0=n$ with an integer $n\geq 1$. Moreover $\sqrt{2}$ can be replaced by any badly approximable number. Indeed all we need is to use the fact that
\begin{equation}\label{irrational}
{|\sqrt{2} n+m|\geq \frac{\gamma}{n}} \quad \forall n, m\in \mathbb{Z}, \quad n \neq 0
\end{equation}
for some $\gamma\in (0, 1)$.
\end{remark}
\begin{remark}\label{rmkdecouple}
The frequencies $\omega(j)$ with $j\in S$ (the \emph{tangential frequencies}) are irrational, while the normal frequencies are integer numbers. This is the key property that allows to decouple the resonant dynamics of the tangential and normal modes.
\end{remark}

We point out that the real subspace
\begin{equation}\label{realsub}
\mathtt{R}:=\left\{ \overline{z_j^+}=z_j^- \right\}
\end{equation}
is invariant under the flow of $H$. Since we shall work on $\mathtt{R}$ it is convenient to adopt the following notation
\[
z_j:=z_j^+, \quad \overline{z_j}:=z_j^-.
\]
We observe that by exciting the tangential modes $\{\pm 1, \pm p\}$ we obtain a solution $w(t, x)$ of the linearized problem at the origin 
\[
\mathrm{i} \dot{z}_j=\omega(j) \,z_j, \quad j\in\mathbb{Z},
\] of the form
\begin{equation}\label{bif}
w(t, x)=a_1\,e^{\mathrm{i} \sqrt{2} t}\, \cos(x)+a_p\,e^{\mathrm{i} \sqrt{2} p t}\, \cos(p x), \quad a_1, a_p\in\mathbb{C}.
\end{equation}
These linear solutions can be seen as periodic motions supported on invariant embedded tori of dimension $4$. We expect that even a small perturbation, which here is provided by the nonlinear terms, may destroy these resonant manifolds and give rise to diffusive orbits.

\smallskip

We write the Hamiltonian \ref{hamwave} as $H=H^{(2)}+H^{(p+1)}$ where
\begin{equation}\label{ham}
\begin{aligned}
H^{(2)}(z, \overline{z}):=&\sum_{j\in\mathbb{Z}} \omega(j) z_j \overline{z_j},\\
H^{(p+1)}(z, \overline{z}):= &\frac{1}{(p+1)\sqrt{2}^{(p+1)}}\sum_{\substack{ (\alpha, \beta)\in{\mathbb{N}}^{\mathbb{Z}}\times {\mathbb{N}}^{\mathbb{Z}},\\ |\alpha|+|\beta|=p+1 ,\\\pi(\alpha, \beta)=0}} C_{\alpha, \beta}\, z^{\alpha}\,\overline{z}^{\beta}
\end{aligned}
\end{equation}
with $| \alpha|=\sum_{j\in\mathbb{Z}} \alpha_j$, $\pi(\alpha, \beta):=\sum_{j\in\mathbb{Z}} j \,(\alpha_j-\beta_j)$,
\[
z^{\alpha}:=\prod_{j\in\mathbb{Z}} z_j^{\alpha_j}, \quad \overline{z}^{\beta}:=\prod_{j\in\mathbb{Z}} \overline{z_j}^{\beta_j}
\]
and the coefficients
\begin{equation}\label{coeff}
C_{\alpha, \beta}:=\frac{(p+1)!}{\alpha! \beta!}\, \prod_{j\in\mathbb{Z}} \omega(j)^{-\frac{\alpha_j+\beta_j}{2}}\in\mathbb{R}.
\end{equation}
\begin{remark}
We observe that a monomial $z^{\alpha}\,\overline{z}^{\beta}$ commutes with the momentum Hamiltonian
\begin{equation}\label{momentum}
M(z, \overline{z}):=-\mathrm{i} \sum_{j\in \mathbb{Z}} j |z_j|^2
\end{equation}
if and only if $\pi(\alpha, \beta)=0$.
\end{remark}

The vector field of $H$ is defined by components as
\[
X_H:=\left(X_H^{(z_j)}, X_H^{(\overline{z}_j)} \right), \quad X_H^{(z_j)}:=\mathrm{i} \partial_{\overline{z_j}} H, \quad X_H^{(\overline{z_j})}:=-\mathrm{i} \partial_{z_j} H, \quad j\in\mathbb{Z}.
\]

\subsection{Birkhoff normal form}\label{sec:bnf}
In this section we perform a Birkhoff normal form procedure in order to highlight the terms of the Hamiltonian \ref{ham} which give the effective dynamics of equation \ref{wave} for a certain range of time.
We shall work on the space of sequences
\begin{equation}\label{elle1}
\ell^1:=\left\{ z\colon\mathbb{Z}\to \mathbb{C}\, | \, || z ||_{\ell^1}:=\sum_{j\in\mathbb{Z}} | z_j |<\infty \right\}.
\end{equation}
We point out that $\ell^1$ is an algebra with respect to the convolution product.\\ We denote by $B_{\eta}$ the ball centered at the origin of $\ell^1$ with radius $\eta>0$, namely
\[
B_{\eta}:=\{ z\in \ell^1 : || z||_{\ell^1}\le \eta \}.
\]

\begin{definition}\label{def}
Let \[
F=F(z, \overline{z}):=\sum_{\substack{(\alpha, \beta)\in{\mathbb{N}}^{\mathbb{Z}}\times {\mathbb{N}}^{\mathbb{Z}},\\ |\alpha|+|\beta|=q ,\\\pi(\alpha, \beta)=0}} F_{\alpha, \beta}\, z^{\alpha}\, \overline{z}^{\beta}
\] be a homogeneous Hamiltonian of degree $q\geq 2$ preserving momentum. We give the following definitions:\\
\noindent (i) Let $0\le k\le q$, we denote by $F^{(q, k)}$ the projection of $F$ on the monomials supported on
\[
\mathcal{A}_{q, k}:=\left\{ (\alpha, \beta)\in{\mathbb{N}}^{\mathbb{Z}}\times {\mathbb{N}}^{\mathbb{Z}} : \,\pi(\alpha, \beta)=0,\,\,|\alpha|+|\beta|=q,\,\, \#(\alpha, \beta, S^c)=k \right\},
\]
where {$\#(\alpha, \beta, S^c):=\sum_{j\notin S} (|\alpha_j|+|\beta_j|)$} .\\ Similarly we define $F^{(q, \le k)}$, $F^{(q, \geq k)}$ as the projection of $F$ on the monomials supported respectively on
\[
\mathcal{A}_{q, \le k}=\bigcup_{i=1,\dots, k} \mathcal{A}_{q, i}, \quad \mathcal{A}_{q, \geq k}=\bigcup_{i=k,\dots, q} \mathcal{A}_{q, i}.\\
\]
\noindent(ii) We define the following norms
\[
[ \! [ F  ] \! ]:=\sup_{(\alpha, \beta)} |F_{\alpha, \beta}|,
\qquad  || X_F ||_{\eta}:=\sum_{j\in\mathbb{Z}} \sup_{B_{\eta}}  \left|X_F^{(z_j)} \right|.
\]
\end{definition}
\begin{lemma}\label{lemmaGT}
Let $F, G$ be two homogeneous Hamiltonians preserving momentum of degree $q$ and $\tilde{q}$ respectively. The Poisson bracket $\{ F, G\}$ defined in \ref{poisson} is a homogeneous Hamiltonian preserving momentum of degree $q+\tilde{q}-2$.\\
Moreover we have the following estimates
\begin{align}
| F(z, \overline{z}) |\le&\,  [ \! [ F ] \! ] || z ||_{\ell^1}^{q},\\ \label{lemvec}
|| X_F(z, \overline{z}) ||_{\ell^1}\le&  \,q\, [ \! [ F ] \! ] || z ||_{\ell^1}^{q-1}, \\ \label{lempoiss}
[ \! [ \{ F, G\} ] \! ]\le&  q\, \tilde{q}\,[ \! [ F ] \! ] [ \! [ G ] \! ].
\end{align}
\end{lemma}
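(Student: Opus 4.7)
The plan is to verify the homogeneity and momentum claim for $\{F,G\}$, and then the three inequalities, each by unwrapping the definitions in \ref{poisson} and Definition \ref{def}.

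For the homogeneity claim, observe that $\partial_{z_j}$ and $\partial_{\overline{z_j}}$ each lower total degree by one, so every summand $\partial_{z_j}F\,\partial_{\overline{z_j}}G$ in the Poisson bracket is homogeneous of degree $(q-1)+(\tilde q-1)=q+\tilde q-2$, and linearity extends this to $\{F,G\}$. Momentum preservation follows from the Jacobi identity applied to $M$, $F$, $G$: since by hypothesis $\{M,F\}=\{M,G\}=0$, we obtain $\{M,\{F,G\}\}=\{\{M,F\},G\}+\{F,\{M,G\}\}=0$.

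For $|F(z,\overline z)|\le [ \! [ F ] \! ]\,|| z ||_{\ell^1}^{q}$, I would bound each monomial crudely by $[ \! [ F ] \! ]\,|z|^{\alpha}|\overline z|^{\beta}$, use $|\overline z_j|=|z_j|$, and collapse $\sum_{|\alpha|+|\beta|=q}|z|^{\alpha+\beta}$ into the claimed power of the $\ell^1$ norm. For $|| X_F ||_{\eta}\le q\,[ \! [ F ] \! ]\,|| z ||_{\ell^1}^{q-1}$, differentiate each monomial to obtain $\partial_{\overline{z_j}}(z^{\alpha}\overline z^{\beta})=\beta_j\,z^{\alpha}\overline z^{\beta-e_j}$ and swap the sums over $j$ and $(\alpha,\beta)$: the total multiplicative weight is $\sum_j\beta_j=|\beta|\le q$, and the remaining factor is a degree-$(q-1)$ monomial controlled on $B_{\eta}$ as in the first inequality.

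The main obstacle is the Poisson bracket bound $[ \! [ \{F,G\} ] \! ]\le q\tilde q\,[ \! [ F ] \! ]\,[ \! [ G ] \! ]$. I would expand the bracket of two generic monomials $F_{\alpha,\beta}z^{\alpha}\overline z^{\beta}$ and $G_{\alpha',\beta'}z^{\alpha'}\overline z^{\beta'}$ as $-\mathrm{i}\,F_{\alpha,\beta}G_{\alpha',\beta'}\sum_j(\alpha_j\beta'_j-\beta_j\alpha'_j)\,z^{\alpha+\alpha'-e_j}\overline z^{\beta+\beta'-e_j}$, and identify the coefficient of each output multi-index $(\mu,\nu)$. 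For a fixed quadruple of source indices, the inner $j$-sum is bounded in absolute value by $\sum_j(\alpha_j\beta'_j+\beta_j\alpha'_j)\le|\alpha||\beta'|+|\beta||\alpha'|\le q\tilde q$. The combinatorial step, which I view as the only nonroutine point, is to show that when several source quadruples produce the same output monomial $(\mu,\nu)$, the grouping of contributions still obeys the same bound; this is handled by fixing $(\mu,\nu)$ and noting that once $j$ and one of the two pairs is specified, the companion pair is determined through $\alpha+\alpha'=\mu+e_j$, $\beta+\beta'=\nu+e_j$, so the resulting coefficient is controlled by a single monomial-level estimate. The antisymmetry of $\alpha_j\beta'_j-\beta_j\alpha'_j$ can only produce further cancellation and is safely discarded for the purpose of the upper bound.
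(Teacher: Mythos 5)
The paper does not supply its own proof: it refers to Lemma 3.2 of Gr\'ebert--Thomann \cite{GT}, and your task was effectively to reconstruct that argument. Your treatment of the homogeneity and of momentum preservation via the Jacobi identity is correct, and so is the differentiation count $\sum_j\beta_j=|\beta|\le q$ behind \ref{lemvec}. However, the remaining parts contain a gap that you notice but do not close, and which is genuinely the heart of the lemma.

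For the pointwise bound and \ref{lemvec} you say you will ``collapse $\sum_{|\alpha|+|\beta|=q}|z|^{\alpha+\beta}$ into the claimed power of the $\ell^1$ norm.'' That collapse is not a free step in the multi-index parametrization used here: for a fixed $\gamma$, the number of decompositions $\gamma=\alpha+\beta$ is $\prod_j(\gamma_j+1)$, which is not dominated by the multinomial coefficient $q!/\gamma!$ appearing in the expansion of $\|z\|_{\ell^1}^q$. One has to either pass to a parametrization by ordered tuples with symmetrized coefficients (as in \cite{GT}) or exploit the momentum constraint more carefully; neither is done in your sketch.

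The more serious issue is in the Poisson-bracket bound. You correctly bound $\sum_j(\alpha_j\beta'_j+\beta_j\alpha'_j)\le q\tilde q$ for one fixed pair of source monomials, and you correctly identify as the nonroutine point that several source quadruples can land on the same output multi-index $(\mu,\nu)$. But your resolution --- ``once $j$ and one of the two pairs is specified, the companion pair is determined, so the resulting coefficient is controlled by a single monomial-level estimate'' --- is not a resolution: determinacy of the companion pair does not cap the number of admissible choices of $j$ and of $(\alpha,\beta)$, and the coefficient $\{F,G\}_{\mu,\nu}$ is the \emph{sum} over all of them, not the maximum. To see that this is a real obstruction and not a formality, take the momentum-preserving pair
\[
F=z_0^2+\overline{z_0}^{\,2},\qquad G=\mathrm{i}\big(z_0^2-\overline{z_0}^{\,2}\big),\qquad q=\tilde q=2,\quad [\![F]\!]=[\![G]\!]=1,
\]
for which a direct computation with \ref{poisson} gives $\{F,G\}=-8\,z_0\overline{z_0}$, hence $[\![\{F,G\}]\!]=8>q\tilde q\,[\![F]\!][\![G]\!]=4$. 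So the estimate, interpreted naively in the multi-index normalization with $[\![\cdot]\!]$ the plain supremum of multi-index coefficients, is not obtainable by the termwise argument you propose; the correct statement relies on the specific symmetrized parametrization and normalization used in \cite{GT}, and any proof must track that normalization. Your sketch should therefore not dismiss the combinatorial step, but rather make the parametrization explicit and show how the multiplicities are absorbed.
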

\begin{proof}
The proof follows the same lines of the proof of Lemma $3.2$ in \cite{GT}.
\end{proof}
We denote by $\Pi_{\mathrm{Ker}}$ and $\Pi_{\mathrm{Rg}}$ the projection on the kernel and the range, respectively, of the adjoint action of $H^{(2)}$
\[
\mathrm{ad}_{H^{(2)}}[F]:=\{F, H^{(2)}\}.
\]
The adjoint action of $H^{(2)}$ is diagonal on the monomials $z^{\alpha}\,\overline{z}^{\beta}$ with eigenvalues {$-\mathrm{i} \Omega(\alpha, \beta)$, where}
\[
\Omega(\alpha, \beta):=\sum_{j\in\mathbb{Z}} \omega(j) (\alpha_j-\beta_j).
\]

\begin{proposition}{(\textbf{Birkhoff normal form})}\label{wbnf}
Recall \ref{ham}. There exists $\eta>0$ small enough such that there exists a symplectic change of coordinates $\Gamma\colon B_{\eta}\to B_{2\eta}$ which takes the Hamiltonian $H$ into its (partial) Birkhoff normal form up to order $p+1$, namely
\begin{equation}\label{hamBNFcoord}
H\circ \Gamma=H^{(2)}+H_{\mathrm{res}}+{H^{(p+1, \geq 2)}}+R
\end{equation}
where:\\
(i) The resonant Hamiltonian is given by
\begin{equation}\label{resham}
H_{\mathrm{res}}:=\Pi_{\mathrm{Ker}} H^{(p+1, 0)}=\frac{1}{\sqrt{2}^{p+1}} \big(2\Re(z_1^p\,\overline{z_{p}}\,)+2\Re(z_{-1}^p\,\overline{z_{-p}}\,) \big).
\end{equation}
(ii) The remainder $R$ is such that
\begin{equation}\label{boundR}
|| X_R ||_{\eta}\le C_1 \gamma^{-1} \eta^{2p-1}+\widetilde{C_1} \gamma^{-2} \eta^{3p-2}
\end{equation}
with
\begin{equation}\label{C1tildeC1}
C_1= 2^{p-1} p^{3} ((p+1)!)^2, \quad \widetilde{C_1}=(3p-1)\,p^{5} 2^{\frac{3}{2} p-\frac{5}{2}}\, ((p+1)!)^3.
\end{equation}
Moreover the map $\Gamma$ is invertible and close to the identity
\begin{equation}\label{gammaid}
|| \Gamma^{\pm 1}-\mathrm{Id} ||_{\eta}\le C_0\, \gamma^{-1} \,\eta^{p}
\end{equation}
with
\[
C_0=\sqrt{2}^{p-1} p^2 (p+1)!
\]
and it preserves the real subspace $\mathtt{R}$ in \ref{realsub}.
\end{proposition}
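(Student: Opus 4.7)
The plan is to use a single step of Lie transform Birkhoff normal form. I set $\Gamma := \Phi_F^1$, the time-one flow of an auxiliary homogeneous Hamiltonian $F$ of degree $p+1$ to be chosen. For such $F$, the Lie series expansion reads
\[
H \circ \Gamma = H^{(2)} + \{H^{(2)}, F\} + H^{(p+1)} + \sum_{k \geq 2} \tfrac{1}{k!}\,\mathrm{ad}_F^{k} H^{(2)} + \sum_{k \geq 1} \tfrac{1}{k!}\,\mathrm{ad}_F^{k} H^{(p+1)},
\]
so I will solve the cohomological equation
\[
\{H^{(2)},F\} + H^{(p+1,\leq 1)} = \Pi_{\mathrm{Ker}} H^{(p+1,\leq 1)}
\]
and absorb everything else into $R$. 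Concretely, for each monomial $(\alpha,\beta) \in \mathcal{A}_{p+1,\leq 1}$ with $\Omega(\alpha,\beta) \neq 0$ I set $F_{\alpha,\beta} := (\mathrm{i}\,\Omega(\alpha,\beta))^{-1}$ times the coefficient of $z^\alpha \bar z^\beta$ in $H^{(p+1)}$. Monomials with $\#(\alpha,\beta,S^c)\geq 2$ are left untouched, which is why they appear as $H^{(p+1,\geq 2)}$ in \ref{hamBNFcoord}.

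Next I identify the kernel. For $(\alpha,\beta)\in\mathcal{A}_{p+1,0}$, setting $a=\alpha_1-\beta_1$, $b=\alpha_p-\beta_p$, $c=\alpha_{-1}-\beta_{-1}$, $d=\alpha_{-p}-\beta_{-p}$, the resonance $\Omega=0$ and momentum $\pi=0$ together force $a=-pb$ and $c=-pd$; combined with $|\alpha|+|\beta|=p+1$ this gives $(p+1)(|b|+|d|)\leq p+1$. The case $b=d=0$ is excluded by the parity of $p+1$ (odd), so $|b|+|d|=1$, leaving exactly the four monomials $z_1^p\bar z_p$, $\bar z_1^p z_p$, $z_{-1}^p\bar z_{-p}$, $\bar z_{-1}^p z_{-p}$, which yields \ref{resham}. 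For $(\alpha,\beta)\in\mathcal{A}_{p+1,1}$ the frequency takes the form $\sqrt{2}\,n+m$ with $n,m\in\mathbb{Z}$ and $m=\pm\omega(j_0)\neq 0$ (since the unique normal mode has integer non-zero frequency by \ref{linearfreq}), so by irrationality of $\sqrt{2}$ the equation $\Omega=0$ has no solution and $\Pi_{\mathrm{Ker}} H^{(p+1,1)}=0$ as noted in Remark \ref{rmkdecouple}.

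The quantitative estimates rest on \ref{irrational}: for non-resonant $(\alpha,\beta)\in\mathcal{A}_{p+1,\leq 1}$ one has $n\neq 0$ with $|n|\leq p+1$, hence $|\Omega(\alpha,\beta)|\geq \gamma/(p+1)$. Together with the bound $|C_{\alpha,\beta}|\leq (p+1)!$ from \ref{coeff} (using $\omega\geq 1$), this gives $[\![F]\!] \lesssim \gamma^{-1}(p+1)!$. The estimate \ref{gammaid} then follows by integrating $\dot\Gamma = X_F\circ\Gamma$ on $B_\eta$ and applying \ref{lemvec} to $F$. For the remainder, the two leading contributions are $\{F,H^{(p+1)}\} + \tfrac12\{F,\{F,H^{(2)}\}\}$ (degree $2p$, producing the $C_1\gamma^{-1}\eta^{2p-1}$ term) and the next iterate $\tfrac12\{F,\{F,H^{(p+1)}\}\}+\tfrac{1}{3!}\mathrm{ad}_F^3 H^{(2)}$ (degree $3p-1$, producing $\widetilde{C_1}\gamma^{-2}\eta^{3p-2}$); iterating \ref{lempoiss} and then \ref{lemvec} produces the factors $p^3((p+1)!)^2$ and $p^5((p+1)!)^3$, while subsequent iterates are absorbed for $\eta$ small enough by a geometric series argument. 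Preservation of the real subspace $\mathtt{R}$ follows because $F$ is real-valued on $\mathtt{R}$ (the cohomological equation respects the reality structure, as $\Omega$ is real and $H^{(p+1)}$ is real on $\mathtt{R}$).

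The main obstacle is not conceptual but bookkeeping: tracking the explicit $p$-dependence of the constants $C_0, C_1, \widetilde{C_1}$ in \ref{C1tildeC1}, since the multinomial factors $(p+1)!$ from $[\![F]\!]$ compound with degree factors $p^2$ from each application of \ref{lempoiss} and \ref{lemvec}, and one must verify that the small-divisor loss is only linear in $p$ (not exponential), so that the radius $\eta$ required for convergence of the Lie series remains usable in the subsequent analysis of the resonant model.
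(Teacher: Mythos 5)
Your proposal follows essentially the same one-step Lie-transform Birkhoff normal form strategy as the paper: the same generating function $F$ supported on $\mathcal{A}_{p+1,\leq 1}$, the same cohomological equation, the same four resonant monomials, and the same use of \ref{irrational} to kill the kernel on $\mathcal{A}_{p+1,1}$. Your identification of $H_{\mathrm{res}}$ via $a=\alpha_1-\beta_1$, $b=\alpha_p-\beta_p$, $c=\alpha_{-1}-\beta_{-1}$, $d=\alpha_{-p}-\beta_{-p}$, solving the linear system to get $a=-pb$, $c=-pd$ and then invoking the oddness of $p+1$, is a slicker route to the same conclusion as the paper's case analysis in \ref{res}--\ref{due}. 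The paper controls the remainder by Taylor-expanding $H\circ\Phi_F^t$ to second order in $t$ with an integral remainder rather than summing the full Lie series; this keeps the number of bracket levels finite (two) and makes the $C_1\gamma^{-1}\eta^{2p-1}+\widetilde{C_1}\gamma^{-2}\eta^{3p-2}$ structure of \ref{boundR} immediate, but both routes are standard.

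One quantitative slip to correct: for $(\alpha,\beta)\in\mathcal{A}_{p+1,1}$ you write $\Omega(\alpha,\beta)=\sqrt{2}\,n+m$ with $|n|\leq p+1$, but the tangential frequency at $\pm p$ is $\omega(\pm p)=\sqrt{2}\,p$, so the integer $n$ carries a factor of $p$ and can in fact be as large as $p^{2}$ (take $\alpha_p=p$, all other tangential entries zero). The correct small-divisor bound is therefore $|\Omega(\alpha,\beta)|\geq\gamma/p^{2}$ as in \ref{nosmalldiv}, and this extra $p^{2}$ is precisely what appears in $C_0=\sqrt{2}^{p-1}p^{2}(p+1)!$. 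Your estimate $[\![F]\!]\lesssim\gamma^{-1}(p+1)!$ misses this factor; the loss is quadratic in $p$, not linear as you conjectured, though this does not affect the feasibility of the subsequent smallness conditions since they are tuned with $\mathtt{a}\sim p\cdot p!\cdot 4^p$ which dwarfs $p^2$.
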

\begin{remark}
With a slight abuse of notation we have renamed the Birkhoff coordinates $(z_j)_j$ as the original ones. 
\end{remark}

\begin{proof}
We consider the following homogeneous Hamiltonian
\[
F=\sum_{{\mathcal{A}_{p+1, \le 1}}} F_{\alpha, \beta} z^{\alpha}\,\overline{z}^{\beta}
\]
with
\[
F_{\alpha, \beta}:=\begin{cases}
\dfrac{{\mathrm{i}}\,C_{\alpha, \beta}\, }{\Omega(\alpha, \beta) \sqrt{2}^{p+1} (p+1)} \qquad \text{if}\,\,\Omega(\alpha, \beta)\neq 0,\\
0 \qquad \qquad \qquad \qquad \qquad \quad \text{otherwise.}
\end{cases}
\]
The coefficients $F_{\alpha, \beta}$ are uniformly bounded since $\mathcal{A}_{p+1, \le 1}$ is a finite set. Thus the Hamiltonian $F$ is well defined and by using Young's inequality is easy to prove that the associated equation is locally well posed in $\ell^1$. By a standard bootstrap argument one can prove that, provided $\eta$ is small enough, the flow $\Phi^t_F$ maps $B_{\eta}$ into $B_{2\eta}$ for $t\in [0, 1]$. We define $\Gamma:=\Phi^1_F$.\\
By the definition of $F_{\alpha, \beta}$ and the fact that $\omega(j)\in\mathbb{R}$ for all $j\in\mathbb{Z}$ (see \ref{linearfreq}) the vector field $X_F$ preserves the real subspace $\mathtt{R}$ in \ref{realsub}.\\
By definition $F$ satisfies the following homological equation
\begin{equation}\label{homoeq}
\{F, H^{(2)}\}+{H^{(p+1, \le 1)} =\Pi_{\mathrm{Ker}} H^{(p+1, \le 1)}.}
\end{equation}
We claim that 
\begin{equation}\label{claim}
\Pi_{\mathrm{Ker}} H^{(p+1, 1)}=0. 
\end{equation} 
If $(\alpha, \beta)\in \mathcal{A}_{p+1, 1}$ then there exists $j\notin S$ such that (note that $\omega(j)=\omega(-j)$ for all $j\in\mathbb{Z}$)
\[
\Omega(\alpha, \beta)=\omega(1) (\alpha_1+\alpha_{-1}-\beta_1-\beta_{-1})+\omega(p) (\alpha_{p}+\alpha_{-p}-\beta_p-\beta_{-p})+ |j| (\alpha_j-\beta_j)
\]
with $(\alpha_j, \beta_j)=(1, 0)$ or $(\alpha_j, \beta_j)=(0, 1)$ and $\sum_{i\in S} \alpha_i+\beta_i=p$. 
Then by \ref{linearfreq} we have that $\Omega(\alpha, \beta)=\sqrt{2} n+m$ for some $n, m\in\mathbb{Z}$ with $0<|n |\le {p^2}$, hence (see \ref{irrational})
\begin{equation}\label{nosmalldiv}
|\Omega(\alpha, \beta)|\geq \frac{\gamma}{p^2}>0.
\end{equation}

This proves the claim \ref{claim}. Now we prove the estimate \ref{gammaid} on the map $\Gamma$.
 By \ref{coeff}, \ref{linearfreq} we have
\begin{equation}\label{boundcoeff}
|C_{\alpha, \beta}|\le (p+1)!.
\end{equation}
If $(\alpha, \beta)\in \mathcal{A}_{p+1, 0}$ and $\Omega(\alpha, \beta)\neq 0$ then by the definition of the tangential frequencies in \ref{linearfreq}
\[
|\Omega(\alpha, \beta)|\geq \sqrt{2}.
\]
Hence by Lemma \ref{lemmaGT}-\ref{lemvec} we have 
\begin{equation}\label{effe}
[ \! [ F ] \! ]\le  \gamma^{-1} \sqrt{2}^{-(p+1)} p^{2} p! , \qquad || X_F ||_{\eta}\le \gamma^{-1} \sqrt{2}^{-(p+1)} p^{2} (p+1)! \eta^{p}.
\end{equation}
By the mean value theorem and using that $\Phi^t_F\colon B_{\eta}\to B_{2\eta}$ for $t\in [0, 1]$ we have
\[
|| \Gamma(z)-z ||_{\eta}\le \sup_{t\in [0, 1]} || X_F(\Phi^t_F(z)) ||_{\eta}\le \gamma^{-1} \sqrt{2}^{p-1} p^{2} (p+1)! \eta^{p}.
\]
This gives the bound \ref{gammaid} for $\Gamma$.
If $\eta$ is small enough we can invert $\Gamma$ by Neumann series and get a similar bound for the inverse.\\
Now we prove formula \ref{resham}.
By \ref{claim} we focus on monomials of the following form
\[
\prod_{i\in S} z_i^{\alpha_i}\,\overline{z_{i}}^{\beta_i}
\]
where
\begin{equation}\label{sum}
\sum_{i\in S}\alpha_i+\sum_{i\in S} \beta_i=p+1, \qquad \alpha_i, \beta_i\geq 0.
\end{equation}
These monomials are resonant if (recall \ref{linearfreq})
\begin{equation}\label{res}
\big(\alpha_1+\alpha_{-1}-\beta_{1}-\beta_{-1}\big)+p (\alpha_{p}+\alpha_{-p}-\beta_{p}-\beta_{-p})=0.
\end{equation}
While
the momentum conservation reads as
\begin{equation}\label{mom}
\big(\alpha_1-\alpha_{-1}-\beta_{1}+\beta_{-1}\big)+p (\alpha_{p}-\alpha_{-p}-\beta_{p}+\beta_{-p})=0.
\end{equation}
First we observe that \ref{res} implies
\begin{equation}\label{modulo}
p |\alpha_{p}+\alpha_{-p}-\beta_{p}-\beta_{-p}|= |\alpha_1+\alpha_{-1}-\beta_{1}-\beta_{-1}|.
\end{equation}
If $|\alpha_{p}+\alpha_{-p}-\beta_{p}-\beta_{-p}|\neq 0$ then by \ref{sum} we have $p+1\geq |\alpha_1+\alpha_{-1}-\beta_{1}-\beta_{-1}|\geq p$.\\
The case $|\alpha_1+\alpha_{-1}-\beta_{1}-\beta_{-1}|=p+1$ is clearly impossible since the left hand side of \ref{modulo} is even.
Thus 
  \begin{equation}\label{uno}
  |\alpha_1+\alpha_{-1}-\beta_{1}-\beta_{-1}|=p, \quad |\alpha_{p}+\alpha_{-p}-\beta_{p}-\beta_{-p}|=1.
  \end{equation}
By the latter equality we deduce that there is exactly one integer number in $\{\alpha_{\pm p}, \beta_{\pm p}\}$ which is equal to $1$, while all the others are zero. Then \ref{mom} implies that
   \begin{equation}\label{due}
   |\alpha_1-\alpha_{-1}-\beta_{1}+\beta_{-1}|=p.
   \end{equation}
    It is easy to see that \ref{uno} and \ref{due} imply $\alpha_1=\beta_{1}$ or $\alpha_{-1}=\beta_{-1}$. Without loss of generality suppose that $\alpha_1=\beta_{1}$, then $|\alpha_{-1}-\beta_{-1}|=p$. This means that $(\alpha_{-1}, \beta_{-1})=(p, 0)$ or $(\alpha_{-1}, \beta_{-1})=(0, p)$ (and so by \ref{sum} $\alpha_1=\beta_{1}=0$).\\ The resonant monomials corresponding to these cases are
  \[
  z_{p} \overline{z_{1}}^{p}, \quad z_{-p } \overline{z_{-1}}^{p} 
  \]
  and their complex conjugate .
  We are left with the case $|\alpha_{p}+\alpha_{-p}-\beta_{p}-\beta_{-p}|=0$. By \ref{modulo} we have 
  $$|\alpha_1+\alpha_{-1}-\beta_{1}-\beta_{-1}|=0.$$ By the fact that $\alpha_i, \beta_i\geq 0$ this implies that
  \[
  \alpha_j+\alpha_{-j}=\beta_j+\beta_{-j}, \qquad j=1, p .
  \]
  Then \ref{sum} becomes $2(\alpha_1+\alpha_{-1}+\alpha_{p}+\alpha_{-p})=p+1$, which is a contradiction since $p+1$ is odd. \\
 Now we prove \ref{hamBNFcoord}. The new Hamiltonian is obtained by Taylor expanding $H\circ \Phi^t_F$ at $t=0$. We have 
 \begin{align*}
 H\circ \Gamma&=H+\{ F, H\}+\frac{1}{2}\int_0^1(1-t) \{ F, \{ F, H\}\}\circ \Phi^t_F\,dt\\
 &=H^{(2)}+\{  F, H^{(2)}\}+{H^{(p+1, \le 1)}+H^{(p+1, \geq 2)}}+\{F, H^{(p+1)}\}\\
 &+\frac{1}{2}\int_0^1(1-t) \{ F, \{ F, H^{(2)}\}\}\circ \Phi^t_F\,dt+\frac{1}{2}\int_0^1(1-t) \{ F, \{ F, H^{(p+1)}\}\}\circ \Phi^t_F\,dt\\
 &\stackrel{\ref{homoeq}, \ref{claim}}{=} H^{(2)}+\Pi_{\mathrm{Ker}} H^{(p+1, 0)}+{H^{(p+1, \geq 2)}}+R
 \end{align*}
 where
 \begin{align*}
 R:=&\{F, H^{(p+1)}\}-\frac{1}{2}\int_0^1(1-t) \{ F, \Pi_{\mathrm{Rg}} {H^{(p+1, \le 1)}}\}\circ \Phi^t_F\,dt\\
 &+\frac{1}{2}\int_0^1(1-t) \{ F, \{ F, H^{(p+1)}\}\}\circ \Phi^t_F\,dt.
 \end{align*}
 By using Lemma \ref{lemmaGT}-\ref{lempoiss} and the bounds \ref{effe}, \ref{boundcoeff} we obtain the following estimates
 \[
 [ \! [ H^{(p+1)} ] \! ]\le p! \sqrt{2}^{-(p+1)},
 \]
 \[
 [ \! [ \{ F, H^{(p+1)}\} ] \! ]\le  \gamma^{-1} 2^{-(p+1)} p^{2} ((p+1)!)^2   ,
 \]
 \[
  [ \! [ \{F, \{ F, H^{(p+1)}\}\} ] \! ]\le  \, \gamma^{-2} p^{5} 2^{-\frac{3}{2} p-\frac{1}{2}}\, ((p+1)!)^3 .  
 \]
 Then using again Lemma \ref{lemmaGT}-\ref{lemvec} we get the estimate \ref{boundR}.
\end{proof}
\subsection{Dynamics of the resonant model}
We introduce the rotating coordinates
\[
z_j=r_j \,e^{\mathrm{i} \omega(j) t}
\]
in order to remove the quadratic part of the Hamiltonian $H\circ \Gamma$. {If $z$ is a solution of \ref{hamBNFcoord}} then $r$ satisfies the equation associated to the Hamiltonian 
\begin{equation}\label{hamtime}
\mathcal{H}=H_{\mathrm{res}}+\mathcal{Q}(t)+\mathcal{R}(t)
\end{equation}
where
\begin{equation}\label{hamrotating}
\begin{aligned}
&{\mathcal{Q}\big((r_j)_{j\in\mathbb{Z}}, t\big):=H^{(p+1, \geq 2)}\big(r_j \,e^{\mathrm{i} \omega(j) t}\big)},\\
& \mathcal{R}\big((r_j)_{j\in\mathbb{Z}}, t\big):=R\big(r_j \,e^{\mathrm{i} \omega(j) t}\big).
\end{aligned}
\end{equation}
The next step is to study the dynamics of the resonant Hamiltonian $H_{\mathrm{res}}$.\\ We observe that the finite dimensional submanifold
\[
\mathcal{U}_S:=\{ r\colon \mathbb{Z}\to\mathbb{C} \,|\, r_j=0\,\,\,j\notin S\}
\]
is invariant by the flow of $H_{\mathrm{res}}$. We introduce the following action-angle variables on $\mathcal{U}_S$
\[
r_j=\sqrt{I_j}\,e^{\mathrm{i} \theta_j} \quad j\in S.
\]
The reduced Hamiltonian now reads as (recall \ref{resham})
\begin{equation}\label{hamred}
\mathcal{G}:=\mathcal{G}^++\mathcal{G}^-, \quad \mathcal{G}^{\pm}= \sqrt{2}^{1-p}\, I_{\pm 1}^{p/2}\,\sqrt{I_{\pm p}} \,\cos(p\, \theta_{\pm 1}-\theta_{\pm p}).
\end{equation}
We observe that $\mathcal{G}$ is the sum of two uncoupled integrable Hamiltonians, indeed both $\mathcal{G}^{\pm}$ have one degree of freedom in an opportune reduction. Then it makes sense to analyze just $\mathcal{G}^+$. 
\begin{remark}
 The partial momenta $\mathtt{M}_{\pm}:=\pm I_{\pm 1}\pm p I_{\pm p}$ are constants of motion for $\mathcal{G}_{\pm}$ respectively. 
\end{remark}
The following lemma provides the existence of an orbit that exhibit a large drift in one of its actions.
\begin{lemma}\label{sistdin}
Let $\varepsilon>0$ be arbitrarily small and $\mathtt{c}> p\varepsilon$. There exists an orbit of $\mathcal{G}^{+}$
\[
g^+_{\varepsilon, \mathtt{c}}(t)=(\theta_1(t), \theta_{p}(t),  I_1(t),  I_{p}(t)) 
\]
 such that
 \begin{equation} \label{ic}
\begin{aligned}
I_1(0) &= \mathtt{c}-p\,\varepsilon, &  I_{p}(0)&=\varepsilon, \\
 I_1(T_0) &=\mathtt{c}(1-p^{-1}), &  I_{p}(T_0)&=\frac{\mathtt{c}}{p^{2}},
\end{aligned}
\end{equation}
with
\begin{equation}\label{timediff}
\frac{\sqrt{2}^{p+1}}{\mathtt{c}^{\tfrac{p-1}{2}}} \,p^{-1} \le T_0\le \frac{\sqrt{2}^{p+1}}{\mathtt{c}^{\tfrac{p-1}{2}} \left( 1-p^{-1} \right)^{p/2}} \,p^{-1}.
\end{equation}

\end{lemma}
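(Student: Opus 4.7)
The plan is to reduce $\mathcal{G}^{+}$ to a single degree of freedom using the conserved partial momentum $\mathtt{M}_{+}=I_{1}+pI_{p}$, and then to identify the level set $\{\mathcal{G}^{+}=0\}\cap\{\mathtt{M}_{+}=\mathtt{c}\}$ as the diffusion channel along which $I_{p}$ drifts in a decoupled way. The key observation is that both Hamiltonian angle velocities
\[
\dot\theta_{1}=\tfrac{p}{2}\sqrt{2}^{\,1-p}I_{1}^{p/2-1}\sqrt{I_{p}}\cos\psi,\qquad \dot\theta_{p}=\tfrac{1}{2}\sqrt{2}^{\,1-p}I_{1}^{p/2}I_{p}^{-1/2}\cos\psi,
\]
where $\psi:=p\theta_{1}-\theta_{p}$, are proportional to $\cos\psi$, so on $\{\mathcal{G}^{+}=0\}$ (which, away from $I_{1}I_{p}=0$, coincides with $\{\cos\psi=0\}$) the angle $\psi$ stays frozen at its initial value.

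I would then pick initial angles such that $\psi\equiv -\pi/2$, i.e.\ $\sin\psi=-1$, which makes $\dot I_{p}=-\sqrt{2}^{\,1-p}I_{1}^{p/2}\sqrt{I_{p}}\sin\psi$ strictly positive. Substituting $I_{1}=\mathtt{c}-pI_{p}$ (the assumption $\mathtt{c}>p\varepsilon$ ensures $I_{1}(0)>0$) reduces the dynamics on the channel to the autonomous scalar ODE
\[
\dot I_{p}=\sqrt{2}^{\,1-p}(\mathtt{c}-pI_{p})^{p/2}\sqrt{I_{p}}.
\]
Starting from $I_{p}(0)=\varepsilon>0$, the right-hand side is strictly positive on $(0,\mathtt{c}/p)$, so $I_{p}(t)$ grows monotonically and reaches $\mathtt{c}/p^{2}$ at some finite time $T_{0}$; at that instant $I_{1}(T_{0})=\mathtt{c}-p\cdot\mathtt{c}/p^{2}=\mathtt{c}(1-p^{-1})$, which establishes \ref{ic}.

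Separating variables yields the explicit quadrature
\[
T_{0}=\sqrt{2}^{\,p-1}\int_{\varepsilon}^{\mathtt{c}/p^{2}}\frac{dI_{p}}{(\mathtt{c}-pI_{p})^{p/2}\sqrt{I_{p}}},
\]
and the estimate \ref{timediff} then follows by the elementary bounds $\bigl(\mathtt{c}(1-p^{-1})\bigr)^{p/2}\le(\mathtt{c}-pI_{p})^{p/2}\le\mathtt{c}^{p/2}$ on $[0,\mathtt{c}/p^{2}]$ together with $\int_{\varepsilon}^{\mathtt{c}/p^{2}}dI_{p}/\sqrt{I_{p}}=2\bigl(\sqrt{\mathtt{c}}/p-\sqrt\varepsilon\bigr)$. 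Taking $\varepsilon$ sufficiently small so that $\sqrt\varepsilon\ll\sqrt{\mathtt{c}}/p$, which is legitimate since $\varepsilon$ is arbitrarily small, reproduces both sides of \ref{timediff}.

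The main conceptual step is the recognition of the diffusion channel $\{\mathcal{G}^{+}=0\}$: because \emph{both} angle velocities (not just one) are proportional to $\cos\psi$, the angles freeze identically on this invariant submanifold and the drift of $(I_{1},I_{p})$ reduces to a one-dimensional separable ODE that can be integrated in closed form. The only mild technical subtlety is the failure of Lipschitz continuity at $I_{p}=0$, which is why one must start at $\varepsilon>0$ and why the lower endpoint contributes only an $O(\sqrt\varepsilon)$ correction in the quadrature; this correction is what forces the lower bound on $T_{0}$ to be understood asymptotically as $\varepsilon\to 0$.
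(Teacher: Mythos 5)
Your proof is correct and follows essentially the same route as the paper: identify the conserved partial momentum $\mathtt{M}_+=I_1+pI_p$ to reduce to one degree of freedom, observe that the combination angle $\psi=p\theta_1-\theta_p$ is frozen on the set $\{\cos\psi=0\}$, pick $\psi\equiv-\pi/2$ so the drift is monotone, and integrate the resulting separable ODE for $I_p$. The paper performs this via the explicit canonical transformation $\varphi_p=-p\theta_1+\theta_p$, $J_1=I_1+pI_p$, $J_p=I_p$ and restricts to the invariant section $\{\varphi_p=\pi/2\}$, which is the same thing dressed up as a change of variables; both yield the identical quadrature for $T_0$.

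One remark worth making explicit. You correctly notice that the lower endpoint of the integral contributes a correction of order $\sqrt{\varepsilon}$, so that the exact lower bound obtained from the quadrature is
\[
T_0\ \geq\ 2\,\sqrt{2}^{\,p-1}\,\mathtt{c}^{-p/2}\Bigl(\tfrac{\sqrt{\mathtt{c}}}{p}-\sqrt{\varepsilon}\Bigr),
\]
which is strictly smaller than the stated lower bound $\sqrt{2}^{\,p+1}\mathtt{c}^{-(p-1)/2}p^{-1}$ whenever $\varepsilon>0$. The paper's proof (using only the crude inequalities $\mathtt{c}-p\varepsilon\ge\mathtt{c}-pJ_p\ge\mathtt{c}(1-p^{-1})$) passes over this; the stated lower bound in the lemma therefore holds only up to the multiplicative factor $1-p\sqrt{\varepsilon/\mathtt{c}}$, which tends to $1$ as $\varepsilon\to0$. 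You flag this and interpret the bound asymptotically, which is the honest reading and is all that is needed downstream (the lower bound on $T_0$ is only used in $\mathcal{O}$-estimates). So your proposal is not a different argument, but it does surface a small imprecision that the paper's own proof leaves implicit.
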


\begin{proof}
We consider the following linear symplectic change of coordinates
\[
\varphi_1=\theta_1, \quad \varphi_p=-p \,\theta_1+\theta_p, \quad J_1=I_1+p\,I_p, \quad J_p=I_p.
\]
The new Hamiltonian reads as
\[
\mathcal{G}_*=\lambda\,(J_1-p J_p)^{p/2} \sqrt{J_p}\,\cos(\varphi_{p}).
\]
Since $J_1=\mathtt{M}_+=I_1+p I_p$ is a constant of motion for $\mathcal{G}_*$ we can look for solutions traveling along the diffusion channel
\begin{equation}\label{fix}
\{ (\mathtt{c}-p I_p, I_p) : I_p\in (0, \mathtt{c}/p)  \}=\{ J_1=\mathtt{M}_+=\mathtt{c} \}.
\end{equation}

We fix the section $\{\varphi_{p}=\pi/2\}$, which  is invariant by the flow of $\mathcal{G}_*$.
 The dynamics of $J_p=I_p$ is determined by the equation
\[
\dot{J}_{p}=\lambda\,\,(\mathtt{c}-p J_p)^{p/2} \sqrt{J_p}=:f(J_p).
\]
The function $f$ is Lipschitz continuous and strictly positive in the interval $[\varepsilon, \mathtt{c}/p)$.
Hence we can easily conclude that there exists an orbit with initial condition $J_{p}(0)=\varepsilon$ which is monotone increasing and reach the value $\mathtt{c}/p^{2}$
at time 
\[
T_0:= \sqrt{2}^{p-1} \int_{\textstyle{\varepsilon}}^{\tfrac{\mathtt{c}}{p^{2}}} \frac{1}{(\mathtt{c} -p\,J_{p})^{p/2}\, \sqrt{J_{p}}}\, d J_{p}.
\]
By using the fact that
\[
\mathtt{c}-p \varepsilon\geq \mathtt{c} -p\,J_{p}\geq \mathtt{c}(1-p^{-1})
\]
on the interval of integration we get the bounds \ref{timediff}.

\end{proof}

\begin{remark}\label{remarkfast}
The time of diffusion $T_0$ has lower and upper bounds that do not depend on $\varepsilon=I_p(0)$, see the first line in \ref{ic}.
\end{remark}
\begin{remark}\label{supactions}
By \ref{fix} and the fact that $J_{p}=I_{p}$ is monotone increasing in the time interval $[0, T_0]$ we have that
\[
\sup_{t\in [0, T_0]} I_1(t)= I_1(0)< \mathtt{c}.
\]
\end{remark}
By the discussion above it is clear that Lemma \ref{sistdin} provides also an orbit $g^-_{\varepsilon, \mathtt{c}}(t)$ for $\mathcal{G}^-$ with the same evolution of the actions $I_{-1}, I_{-p}$ as in \ref{ic}.
We consider the family of solutions $g^{\pm}_{\varepsilon, \mathtt{c}}(t)$ given by the Lemma \ref{sistdin} with
\begin{equation}\label{parameters}
\begin{aligned}
 {\mathtt{c}\in [\mathtt{c}_-, \mathtt{c}_+]}, \qquad
\varepsilon>0,
\end{aligned} 
\end{equation}
where $\tc_{\pm}$ and $\varepsilon$ shall be chosen later and shall depend respectively on $p$ and $(p, s)$.

We define $b(\varepsilon, \mathtt{c}; t, x)=b(t, x)=\sum_{j\in\mathbb{Z}} b_j(t) e^{\mathrm{i} j x}$ with 
\[
b_j(t):=
\begin{cases}
\sqrt{I_j(t)} \,e^{\mathrm{i}\theta_j(t)} \quad j\in S\\
0 \qquad \qquad \quad \,\,\,\,\,\text{otherwise}.
\end{cases}
\]
Then the function $b(t, x)$ is a solution of $H_{\mathrm{res}}$ such that
\begin{equation}\label{energy}
\begin{aligned}
|b_{\pm 1}(0)|^2&=\mathtt{c}- \varepsilon\,p, &  |b_{\pm p}(0)|^2&=\varepsilon,\\
 |b_{\pm 1}(T_0)|^2&=\mathtt{c} (1-p^{-1}),  & |b_{\pm p}(T_0)|^2&=\frac{\mathtt{c}}{p^{2}}.
\end{aligned}
\end{equation}
By \ref{timediff} we have
\begin{equation}\label{Tzero}
\frac{\sqrt{2}^{p+1}}{p\sqrt{\tc_+}^{p-1}}\le T_0\le  \frac{ \sqrt{2}^{p+3}}{p \sqrt{\tc_-}^{p-1}}.
\end{equation}

\subsection{Approximation argument}\label{sec:approx}
In this section we show that solutions of the Hamiltonian \ref{hamtime} whose initial conditions are $\ell^1$-close enough to the initial datum of (an opportune rescaling of) $b(t, x)$ remain $\ell^1$-close to it for all the time that we need to appreciate the drift in the actions \ref{energy}. 

\smallskip

 The solutions $u(t, x)$ of $H_{\mathrm{res}}$ are invariant under the rescaling
\[
u(t, x)\to \mu^{-1} u(\mu^{-p+1} t, x).
\]
Given $\mu>0$ we consider the rescaled solution 
\begin{equation}\label{rescaling}
r^{\mu}(t, x):=\mu^{-1} b(\mu^{-p+1} t, x).
\end{equation}
We remark the dependence of $r^{\mu}(t, x)=r^{\mu}(\varepsilon, \tc; t,x)$ on the parameters $\varepsilon$, the initial value of the energy of the high modes, and $\tc$, the height of the partial momenta.
The diffusion time is rescaled in the following way
\begin{equation}\label{time}
 T:=\mu^{p-1} T_0 ,
\end{equation}
hence we need to ensure a good approximation of $r^{\mu}(t)$ by solutions $r(t)$ of \ref{hamtime} at least in the range of time $[0, T]$.
By Remark \ref{supactions} we have
\begin{equation}\label{stimabase}
|| r^{\mu}(t) ||_{\ell^1}\le 4 \sqrt{\mathtt{c}} \mu^{-1} \qquad \text{for}\,\,t\in [0, T].
\end{equation}

Let us define (recall $\gamma$ in \ref{irrational})
\begin{equation}\label{ro0}
\begin{aligned}
&\mu_0:=\,p^{\mathtt{a}} \gamma^{-\mathtt{b}}, \quad \mathtt{a}>0,  \quad \mathtt{b}:= \frac{1}{\frac{p}{4}-2}, \quad \mathtt{r}:=\frac{\tc_+}{\tc_-},
\end{aligned}
\end{equation}
{where $\mathtt{a}$ is a constant to be chosen large enough.}
 \begin{proposition}\label{approxarg}
Let $\varepsilon>0$. There exists a universal constant $p_0>0$ such that for all $p\geq p_0$ the following holds. If we consider
\begin{align}
&\mathtt{a}\geq \, p\,p!\, 4^{p+1} , \qquad \tc\in [\tc_-, \tc_+],   \label{condparam1}\\[2mm]   \label{condparam2}
& \mathtt{r}^{\frac{p-1}{2}}=2 , \quad \tc_-\geq 1, \quad \tc_+\le 2^{2-p^{-1}} (p!)^{2 p^{-1}} p^{\frac{\mathtt{a}}{2} -14},
\end{align}
then
for all $\mu\geq \mu_0=\mu_0(\mathtt{a}, p)$ in \ref{ro0} we have that if $r(t)$ is a solution of \ref{hamtime} with initial datum satysfying 
\begin{equation}\label{sigma1}
 || r(0)-r^{\mu}(\varepsilon, \tc; 0) ||_{\ell^1}\le \mu^{-\sigma_1}, \quad \sigma_1:=\frac{3 p}{4}+2 , 
 \end{equation}
 then
\begin{equation}\label{sigma2}
|| r(t)-r^{\mu}(\varepsilon, \tc; t) ||_{\ell^1}\le 2 \mu^{-\sigma_2}, \quad \sigma_2:=\sigma_1-\frac{1}{2}, \quad \text{for}\,\,t\in [0, T].
\end{equation}
\end{proposition}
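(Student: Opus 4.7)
The plan is a standard continuity/bootstrap argument on the $\ell^1$-distance $\xi(t):=r(t)-r^{\mu}(t)$. Define $T^{*}:=\sup\{t\in[0,T] : \|\xi(s)\|_{\ell^1}\le 2\mu^{-\sigma_2}\text{ for all }s\in[0,t]\}$ and show that $T^{*}=T$ by proving that on $[0,T^{*}]$ the bound actually holds with a constant strictly smaller than $2$, which by continuity of $\xi$ prevents $T^{*}<T$. Under this bootstrap and \ref{stimabase} one has the a priori estimate $\|r(t)\|_{\ell^1}\le 5\sqrt{\tc_+}\,\mu^{-1}$ on $[0,T^{*}]$, valid as soon as $\mu_0$ is large enough that the subleading term $\mu^{-\sigma_2}$ is negligible compared with $\mu^{-1}$.

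Next I would compute $\dot\xi=X_{H_{\mathrm{res}}}(r)-X_{H_{\mathrm{res}}}(r^\mu)+X_{\mathcal{Q}(t)}(r)+X_{\mathcal{R}(t)}(r)$ and estimate each piece separately. The Lipschitz bound for the degree-$p$ homogeneous vector field $X_{H_{\mathrm{res}}}$ (Lemma \ref{lemmaGT}) gives $\|X_{H_{\mathrm{res}}}(r)-X_{H_{\mathrm{res}}}(r^\mu)\|_{\ell^1}\lesssim p^2\,[\![H_{\mathrm{res}}]\!]\,\|r\|_{\ell^1}^{p-1}\|\xi\|_{\ell^1}$, a term of size $\mu^{-(p-1)}\|\xi\|_{\ell^1}$ up to harmless constants. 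The key new ingredient is the bound on $X_{\mathcal{Q}(t)}$: because $\mathcal{Q}$ is the rotated version of $H^{(p+1,\ge 2)}$, every monomial carries at least two factors with indices in $S^{c}$; differentiating with respect to a single $\bar z_j$ destroys at most one such factor, so each component of $X_{\mathcal{Q}(t)}(r)$ still contains at least one factor lying in $r_{S^{c}}$. Since $r^\mu$ is supported on $S$, we have $\|r_{S^{c}}\|_{\ell^1}=\|\xi_{S^{c}}\|_{\ell^1}\le\|\xi\|_{\ell^1}$, hence
\begin{equation*}
\|X_{\mathcal{Q}(t)}(r)\|_{\ell^1}\lesssim (p+1)\,[\![H^{(p+1)}]\!]\,\|r\|_{\ell^1}^{p-1}\|\xi\|_{\ell^1}\lesssim p!\,2^{-(p+1)/2}\,\tc_+^{(p-1)/2}\,\mu^{-(p-1)}\|\xi\|_{\ell^1},
\end{equation*}
a term of the same Gronwall form as the Lipschitz estimate. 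Finally the remainder $\mathcal{R}$ acts as a pure source and \ref{boundR}--\ref{C1tildeC1} give $\|X_{\mathcal{R}(t)}(r)\|_{\ell^1}\le C_1\gamma^{-1}\eta^{2p-1}+\widetilde C_1\gamma^{-2}\eta^{3p-2}$ with $\eta\sim\mu^{-1}$.

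Collecting the three estimates and integrating by Gronwall over $[0,T^{*}]\subset[0,T]$ with $T\lesssim\mu^{p-1}$ (from \ref{Tzero} and \ref{time}), the exponential factor is $\exp(O(1))$ because the combined Gronwall coefficient times $T$ is $O(1)$. One obtains
\begin{equation*}
\|\xi(t)\|_{\ell^1}\lesssim \|\xi(0)\|_{\ell^1}+T\bigl(C_1\gamma^{-1}\mu^{-(2p-1)}+\widetilde C_1\gamma^{-2}\mu^{-(3p-2)}\bigr)\lesssim \mu^{-\sigma_1}+\gamma^{-1}\mu^{-p}+\gamma^{-2}\mu^{-(2p-1)}.
\end{equation*}
The first summand is smaller than $\mu^{-\sigma_2}$ by a factor $\mu^{-1/2}$, since $\sigma_1=\sigma_2+1/2$. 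For the second, $\sigma_2=3p/4+3/2$ yields $p-\sigma_2=p/4-3/2>p/4-2=1/\mathtt{b}$, so the hypothesis $\mu\ge p^{\mathtt{a}}\gamma^{-\mathtt{b}}$ implies $\gamma^{-1}\mu^{-p}\le \mu^{-\sigma_2-1/2}$; the third summand is even smaller. The large factorial prefactors $C_1,\widetilde C_1, p!, \tc_+^{(p-1)/2}$ are then absorbed into the remaining margin thanks to the explicit lower bound $\mathtt{a}\ge p\cdot p!\cdot 4^{p+1}$ and the ceiling $\tc_+\le 2^{2-p^{-1}}(p!)^{2/p}p^{\mathtt{a}/2-14}$ of \ref{condparam1}--\ref{condparam2}, producing the strict inequality $\|\xi(t)\|_{\ell^1}<2\mu^{-\sigma_2}$ on $[0,T^{*}]$ and closing the bootstrap.

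The main obstacle is the numerical bookkeeping rather than any conceptual subtlety: the constants $C_0, C_1, \widetilde C_1$ from Proposition \ref{wbnf} grow factorially in $p$, and one must verify that the explicit quantitative hypotheses \ref{condparam1}--\ref{condparam2} are precisely tight enough to dominate these prefactors by the positive powers of $\mu^{1/2}$ and $p^{\mathtt{a}/2-14}$ that form the slack between $\sigma_1$ and $\sigma_2$. Everything else is a routine Gronwall integration combined with the gain provided by the at-least-two-normal-modes structure of $H^{(p+1,\ge 2)}$.
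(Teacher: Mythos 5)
Your decomposition of $\dot\xi$, the bootstrap ansatz, the treatment of $X_{\mathcal{Q}}$ via the at-least-two-normal-modes structure, and the use of $X_{\mathcal{R}}$ as a pure source all match the structure of the paper's argument. The genuine gap is in the Gronwall step: the exponential factor is \emph{not} $\exp(O(1))$. Writing $\Xi:=\sqrt{2}^{3p-1}p^2\,p!\,\sqrt{\tc_-}^{p-1}$, the coefficient of $\|\xi\|_{\ell^1}$ in the differential inequality is $\Xi\,\mu^{-(p-1)}$, and with $T=\mu^{p-1}T_0$ the exponent evaluated at $t=T$ is $\Xi\,T_0$, which by \ref{Tzero} is of order $p\,p!\,4^{p}$. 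This is independent of $\mu$ but grows super-exponentially in $p$; calling it ``$O(1)$'' and hiding it inside a $\lesssim$ conceals the very quantity that the hypothesis on $\mathtt{a}$ is there to defeat. Indeed one checks that $\mathtt{a}\ge p\,p!\,4^{p+1}$ is exactly the condition $\mathtt{a}\ge 2\Xi\,T_0$, so that $\log\mu_0\ge\mathtt{a}\log p>2\Xi\,T_0$ and hence $\exp(\Xi\,T_0)\le\mu_0^{1/2}\le\mu^{1/2}$.

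As a consequence, the gap $\sigma_1-\sigma_2=\tfrac12$ is not ``remaining margin'': the full half-power of $\mu$ between $\mu^{-\sigma_1}$ and $\mu^{-\sigma_2}$ is consumed by the Gronwall exponential, and the source contribution must therefore already come in at level $\mu^{-\sigma_1}$ (this is why the paper bounds $\|Z_0\|_{\ell^1}\le\Xi\,\mu^{-(7/4)p-1}$, so that $\text{source}/\text{coefficient}=\mu^{-\sigma_1}$). Your own final display, $\|\xi(t)\|_{\ell^1}\lesssim\mu^{-\sigma_1}+\gamma^{-1}\mu^{-p}+\cdots$, with an implicit constant of size $\exp(\Xi\,T_0)$, does close the bootstrap only because that implicit constant is $\le\mu^{1/2}$, which you have not verified. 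The fix is to keep the exponential explicit, as the paper does: obtain $\|\xi(t)\|_{\ell^1}\le 2\mu^{-\sigma_1}\exp(\Xi\,\mu^{-p+1}t)$, and then show $\Xi\,\mu^{-p+1}T\le\tfrac12\log\mu$ from \ref{condparam1}, \ref{Tzero} and \ref{ro0}; this is precisely where $\mathtt{a}\ge p\,p!\,4^{p+1}$ enters, and without this step the proposal has a genuine hole even though it is structurally aligned with the paper.
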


\begin{proof}
We set $\xi:=r-r^{\mu}$ and we study the evolution of its $\ell^1$-norm. We observe that $\Pi_S^{\perp}\xi=\Pi_S^{\perp} r$.
We have that $\dot{\xi}=Z_0(t)+Z_1(t)\,\xi+Z_2(t, \xi)$ with (recall \ref{hamrotating})
\begin{align*}
&Z_0:=X_{\mathcal{R}}(r_{\mu}), \\
&Z_1:= D X_{H_{\mathrm{res}}}(r^{\mu}),\\
&Z_2:=X_{H_{\mathrm{res}}}(r)-X_{H_{\mathrm{res}}}(r^{\mu})-D X_{H_{\mathrm{res}}}(r^{\mu})\xi+X_{\mathcal{R}}(r)-X_{\mathcal{R}}(r_{\mu})+X_{\mathcal{Q}}(r)-X_{\mathcal{Q}}(r_{\mu}).
\end{align*}
By the differential form of Minkowsky's inequality we get
\[
\frac{d}{d t} || {\xi} ||_{\ell^1}\le ||Z_0(t)||_{\ell^1}+||Z_1(t)\,\xi ||_{\ell^1}+||Z_2(t)||_{\ell^1}.
\]
Under suitable conditions on the parameters $\mathtt{a}, \tc_{\pm}, p$, we provide bounds on the terms of the right hand side of the above inequality. Later we shall prove that such conditions are satisfied by the assumptions \ref{condparam1}, \ref{condparam2}.
Let us denote
\begin{equation}\label{def:Xi}
\Xi=\Xi( p, \tc_-):=\,\sqrt{2}^{3p-1}\,p^2 p! \sqrt{\tc_-}^{p-1}.
\end{equation}
We shall impose
\begin{itemize}
\item  an upper bound for $\tc_+$ to obtain the bound of $Z_0$;
\item  an upper bound on $\mathtt{r}$ to obtain the bound of $Z_1$;
\item  an upper bound on $\mathtt{r}$ and a lower bound for $\tc_-$ to obtain the bound of $Z_2$.
\end{itemize}

\smallskip

\noindent\textbf{Bound for $Z_0$}: By \ref{boundR}, \ref{stimabase} we have
\[
|| Z_0||_{\ell^1}\le C_1 \gamma^{-1}\,(4 \sqrt{\mathtt{c}})^{2p-1}\, \mu^{-2p+1}+\widetilde{C_1} \gamma^{-2}\,(4 \sqrt{\mathtt{c}})^{3p-2}\, \mu^{-3p+2}.
\]
\begin{itemize}
\item We claim that, under the following conditions
\begin{align}
\mathtt{r}^{\frac{1-p}{2}}\,\sqrt{2}^{3(p-1)}\,\,p!\,p^{\mathtt{a} (\frac{p}{4}-2)-7p}  &\geq \sqrt{\tc}_+^{p}   ,  \label{ap1}\\
\mathtt{r}^{\frac{1-p}{2}}\,\sqrt{2}^{3(p-1)}\,\,p!\,p^{\mathtt{a} (\frac{5 p}{4}-3)-\frac{21\,p}{2}-\frac{3}{2} }  &\geq  \sqrt{\tc}_+^{2p-1}  ,  \label{ap2}
\end{align}
we have
\begin{equation}\label{bound:Z0}
|| Z_0 ||_{\ell^1}\le \Xi \mu^{-(7/4)p-1}.
\end{equation}
\end{itemize}
We want to impose that
\begin{equation}\label{power}
\begin{aligned}
C_1 \,(4 \sqrt{\mathtt{c}})^{2p-1}\, \gamma^{-1} \mu^{-2p+1} &\le (\Xi/2) \mu^{-(7/4) p-1}, \\
 \widetilde{C_1} \gamma^{-2} \,(4 \sqrt{\mathtt{c}})^{3p-2}\, \mu^{-3p+2} &\le (\Xi/2) \mu^{-(7/4) p-1}.
 \end{aligned}
\end{equation}
{The first inequality in \ref{power} is equivalent to  
$
\mu^{\frac{p}{4}-2} \gamma \geq (2/\Xi) C_1 \,(4 \sqrt{\mathtt{c}})^{2p-1}\,
$.\\ Thus we ask for 
\begin{equation}\label{ap01}
\begin{aligned}
\mu_0^{\frac{p}{4}-2} \gamma=p^{\mathtt{a} (p/4-2)} \gamma^{1-\mathtt{b}(\frac{p}{4}-2)}\stackrel{\ref{ro0}}{=} p^{\mathtt{a} (p/4-2)} &\geq (2/\Xi) C_1 \,(4 \sqrt{\mathtt{c}})^{2p-1}\,\\
&\stackrel{\ref{C1tildeC1}}{=} (1/\Xi)\,2^{p} p^{3} ((p+1)!)^2 \,(4 \sqrt{\mathtt{c}})^{2p-1}\,.
\end{aligned}
\end{equation}
If $p\geq 4$ then
\[
2^{p} p^{3} ((p+1)!)^2 \,(4 \sqrt{\mathtt{c}})^{2p-1}\le p^{7p+2} \,\sqrt{\mathtt{c}_+}^{2p-1}.
\]
Therefore condition \ref{ap1} implies \ref{ap01}.\\
The second inequality in \ref{power} is equivalent to  
$
\mu^{\frac{5 p}{4}-3} \gamma^2 \geq (2/\Xi)\,\widetilde{C_1} (4 \sqrt{\mathtt{c}})^{3p-2}
$. Then it is sufficient to ask for 
\begin{equation}\label{ap02}
\mu_0^{\frac{5 p}{4}-3} \gamma^2=p^{\mathtt{a} (\frac{5 p}{4}-3)} \gamma^{2-\mathtt{b}(\frac{5 p}{4}-3)} \geq (2/\Xi) \widetilde{C_1} (4 \sqrt{\mathtt{c}})^{3p-2}
\end{equation}
where
\[
(2/\Xi) \widetilde{C_1} (4 \sqrt{\mathtt{c}})^{3p-2}\stackrel{\ref{C1tildeC1}}{=} (2/\Xi)(3p-1)\,p^{5} 2^{\frac{3}{2} p-\frac{5}{2}}\, ((p+1)!)^3 (4 \sqrt{\mathtt{c}})^{3p-2}.
\]
By the definition of $\mathtt{b}$ in \ref{ro0}, we have $2-\mathtt{b}(\frac{5 p}{4}-3)<0$, thus we can disregard the presence of $\gamma$ in the above inequality. If $p\geq 4$ then 
\[
(3p-1)\,p^{5} 2^{\frac{3}{2} (p-1)}\, ((p+1)!)^3 (4 \sqrt{\mathtt{c}})^{3p-2}\le p^{\frac{7}{2} (3p+1)}  \sqrt{\mathtt{c}_+}^{3p-2}.
\]
Then condition \ref{ap2} implies \ref{ap02}.}
This proved the claim.

\smallskip

\noindent\textbf{Bound for $Z_1$}: 
Assume that 
\begin{equation}\label{ratiocs}
\mathtt{r}^{\frac{p-1}{2}}\le \frac{2\,p \,p! }{ (p-1)}  . 
\end{equation} 
Taking into account the definition of $H_{res}$ in \ref{resham}, the bound \ref{stimabase} and Lemma \ref{lemmaGT} we have
\begin{equation}\label{bound:Z1}
|| Z_1(t)\, \xi ||_{\ell^1}\le p (p-1) \sqrt{2}^{-(p+1)} (4 \sqrt{\tc})^{p-1} \mu^{ -p+1} ||\xi||_{\ell^1} \le \Xi  \mu^{ -p+1} ||\xi||_{\ell^1}
\end{equation}
provided that 
\begin{equation}
p (p-1) \sqrt{2}^{-(p+1)} (4 \sqrt{\tc})^{p-1}\le \Xi.
\end{equation}
This is implied by \ref{ratiocs}.

\smallskip

\noindent\textbf{Bound for $Z_2$}: We use a bootstrap argument. Let us define $T_*$ as the sup of the times $t$ such that 
\begin{equation}\label{bootass}
|| \xi(t)||_{\ell^1}\le 2 \mu^{-\sigma_2}.
\end{equation}
We observe that for $t=0$ we have $|| \xi(0) ||_{\ell^1}\le \mu^{-\sigma_1}$ and, since $\sigma_1-\sigma_2=1/2$ and $\mu$ will be taken large enough, we have $T_*>0$. A posteriori we shall prove that $T_*>T>0$. We call
\[
Z_{2, 1}:=X_{H_{\mathrm{res}}}(r)-X_{H_{\mathrm{res}}}(r^{\mu})-D X_{H_{\mathrm{res}}}(r^{\mu})\xi,
\]
\[
Z_{2, 2}:=X_{\mathcal{R}}(r)-X_{\mathcal{R}}(r_{\mu}), \quad Z_{2, 3}:=X_{\mathcal{Q}}(r)-X_{\mathcal{Q}}(r_{\mu}).
\]
\begin{itemize}
\item We claim that, under the following condition
\begin{equation}\label{ratiocs2}
\,\sqrt{\tc_-}\geq  \mathtt{r}^{\frac{p-1}{2}}  \dfrac{(p^2-1) 2^{p-5} }{p! }      
\end{equation}
we have
\begin{equation}\label{bound:Z21}
|| Z_{2, 1} ||_{\ell^1}\le \Xi  \mu^{ -p+1} ||\xi||_{\ell^1}.
\end{equation}
\end{itemize}

By the definition of $H_{res}$ in \ref{resham}
\begin{align*}
|| Z_{2, 1} ||_{\ell^1} \le& p^2 (p+1)\, \sqrt{2}^{-(p+1)} \sum_{j=2}^{p} || r^{\mu} ||_{\ell^1}^{p-j} ||\xi ||_{\ell^1}^j \\
\stackrel{\ref{bootass}, \ref{stimabase}}{\le}& p^2 (p+1) \, \sqrt{2}^{-(p+1)}2^{p-1}  (4\sqrt{\mathtt{c}})^{p-2}\sum_{j=2}^{p} \mu^{(j-p)-\sigma_2(j-1)} || \xi ||_{\ell^1}  .
\end{align*}
Since $\sigma_2\geq 1$ {we have $\mu^{(j-p)-\sigma_2(j-1)}\le \mu^{-p+1}$ for $j\geq 2$ and
\[
|| Z_{2, 1} ||_{\ell^1}\le p^2 (p+1) \, \sqrt{2}^{-(p+1)}2^{p-1}  (4\sqrt{\mathtt{c}})^{p-2}\,(p-1)  \,\mu^{-p+1} ||\xi ||_{\ell^1}\le \Xi  \mu^{ -p+1} ||\xi||_{\ell^1}
\]}
provided that (recall \ref{def:Xi})
\begin{equation}\label{c+c-}
p^2 (p^2-1) \, \sqrt{2}^{5p-11}  \sqrt{\mathtt{c}}^{p-2}\,\le \Xi.
\end{equation}
This is implied by the bound \ref{ratiocs2}.
Now recall the bound \ref{boundR}. We have
\begin{equation}\label{bZ22}
|| Z_{2, 2} ||_{\ell^1}\le p^2 C_1 \gamma^{-1} \sum_{j=1}^{2p-1} || r^{\mu} ||_{\ell^1}^{2p-1-j} ||\xi ||_{\ell^1}^j +\widetilde{C_1} p^2 \gamma^{-2} \sum_{j=1}^{3p-2} || r^{\mu} ||_{\ell^1}^{3p-2-j} ||\xi ||_{\ell^1}^j .
\end{equation}
{
We reason as for the bound on $Z_{2, 1}$. We shall use systematically bounds \ref{stimabase}, \ref{power}, \ref{bootass}. 
\begin{itemize}
\item We claim that, under the following conditions (recall \ref{def:Xi})
\begin{align}
\sqrt{\tc_-} \geq& 2^{2p} p^{2-\mathtt{a}(\frac{p}{4}+1)} (2p-1) , \label{bromuro}\\
\sqrt{\tc_-} \geq& 2^{3p-5} p^{2-\mathtt{a}(\frac{p}{4}+1)}(3p-2) . \label{bromuro2}
\end{align}
we have
\begin{equation}\label{bound:Z22}
|| Z_{2, 2} ||_{\ell^1}\le \Xi \mu^{-(3/2)p+1} || \xi ||_{\ell^1}.
\end{equation}
\end{itemize}
We deal with the first term in the r.h.s of \ref{bZ22}. We have $\textstyle{\mu^{-2p+1+j-\sigma_2(j-1)}\le \mu^{-2p+2}}$, because $\textstyle{\sigma_2\geq 1}$, then
\[
 \sum_{j=1}^{2p-1} || r^{\mu} ||_{\ell^1}^{2p-1-j} ||\xi ||_{\ell^1}^j\le 2^{2p-2} (4 \sqrt{\tc})^{2p-2} (2p-1) \mu^{-2p+2} || \xi ||_{\ell^1}.
\]
We want to prove that $\textstyle{C_1 \gamma^{-1} 2^{2p-2} (4 \sqrt{\tc})^{2p-2}  (2p-1) p^2 \mu^{-2p+2}\le (\Xi/2) \mu^{-(3/2)p+1}}$. By using \ref{power}, it is easy to check that this is implied by condition \ref{bromuro}.
Regarding the second term in the r.h.s of \ref{bZ22}, we have $\textstyle{\mu^{-3p+2+j-\sigma_2(j-1)}\le \mu^{-3p+3}}$, because $\textstyle{\sigma_2\geq 1}$, then
\[
\sum_{j=1}^{3p-2} || r^{\mu} ||_{\ell^1}^{3p-2-j} ||\xi ||_{\ell^1}^j\le 2^{3p-3} (4 \sqrt{\tc})^{3p-3}  (3p-2) \mu^{-3p+3} || \xi ||_{\ell^1}.
\]
We want to prove that $p^2\widetilde{C_1} \gamma^{-2} 2^{3p-3} (3p-2) (4 \sqrt{\tc})^{3p-3} \mu^{-3p+3}\le (\Xi/2) \mu^{-(3/2)p+1}$. By using \ref{power}, one can check that this is implied by \ref{bromuro2}.
This proves the claim.\\
}
The most problematic term is {$H^{(p+1, \geq 2)}$}, because it has the same degree of $H_{\mathrm{res}}$. However we recall that the monomials of {$H^{(p+1, \geq 2)}$} are Fourier supported on at least {two} normal modes. 
\begin{itemize}
\item We claim that, under the conditions
\begin{align}
\mathtt{r}^{\frac{p-1}{2}}\le & 2, \label{pallino}\\
\sqrt{\tc_-}\geq & \mathtt{r}^{\frac{p-1}{2}}\, 2^{-(3p+4)}, \label{pallino2}
\end{align}
we have
\begin{equation}\label{bound:Z23}
|| Z_{2, 3} ||_{\ell^1}\le \Xi \mu^{-p+1} || \xi ||_{\ell^1}.
\end{equation}
\end{itemize}
By the \ref{ham}, \ref{hamrotating}, the bound \ref{boundcoeff} and by noting that $ || \Pi_S^{\perp} r ||_{\ell^1}\le  || \xi ||_{\ell^1}$ we have
\begin{align*}
|| Z_{2, 3} ||_{\ell^1}\le& p^2 p! \sqrt{2}^{-(p+1)} \sum_{{j=1}}^{p} || r^{\mu} ||_{\ell^1}^{p-j} ||\xi ||_{\ell^1}^j   \\
\stackrel{\ref{bootass}, \ref{stimabase}}{\le}&      p^2 p! \sqrt{2}^{-(p+1)} 4^{p-1} \sqrt{\tc}^{p-1} \mu^{1-p} || \xi ||_{\ell^1}\\
+&p^2 p! \sqrt{2}^{-(p+1)} 4^{p-2} \sqrt{\tc}^{p-2} 2^{p-1} ||\xi ||_{\ell^1} \sum_{j=2}^p \mu^{j-p-\sigma_2(j-1)}   \\
\le& p^2 p! \sqrt{2}^{-(p+1)} 4^{p-1} \sqrt{\tc}^{p-1} \mu^{1-p} || \xi ||_{\ell^1}\\
+&p^3 p! \sqrt{2}^{-(p+1)} 4^{p-2} \sqrt{\tc}^{p-2} 2^{p-1}  \mu^{1-p} ||\xi ||_{\ell^1}.
\end{align*}
We obtain the bound \ref{bound:Z23} provided that
\begin{equation}
p^2 p! \sqrt{2}^{-(p+1)} 4^{p-1} \sqrt{\tc}^{p-1}\le \frac{\Xi}{2}, \qquad p^3 p! \sqrt{2}^{-(p+1)} 4^{p-2} \sqrt{\tc}^{p-2} 2^{p-1}\le \frac{\Xi}{2}.
\end{equation}
Those are implied by \ref{pallino}, \ref{pallino2} respectively.
By collecting the previous estimates \ref{bound:Z0}, \ref{bound:Z1}, \ref{bound:Z21}, \ref{bound:Z22}, \ref{bound:Z23} we obtained {
\[
\frac{d}{dt} || \xi ||_{\ell^1}\le \Xi \Big( \mu^{-(7/4)p-1}+\,  \mu^{ -p+1} ||\xi||_{\ell^1} \Big).
\]
}

Then
by Gronwall lemma 
\[
|| \xi(t) ||_{\ell^1} \le 2  {\mu}^{-\sigma_1} \,\exp( \Xi \mu^{-p+1} t) \quad \text{for}\,\,t\in[0, T_*].
\]

For times $t\in[0, c_0\mu^{p-1} \log(\mu)]$ with 
\begin{equation}\label{c0upper}
 c_0 := \frac{1}{2  \Xi}    
\end{equation}
 we have that $|| \xi ||_{\ell^1}\le 2 \mu^{ \Xi c_0-\sigma_1}\le 2 \mu^{-\sigma_2}$. Then $T_*\geq c_0 \mu^{p-1} \log(\mu)$.
We now prove that $c_0 \mu^{p-1} \log(\mu)>T$. Then $T_*>T$ and we can drop the bootstrap assumption.  Recalling \ref{Tzero}, \ref{parameters} we prove that 
\[
c_0 \log(\mu_0)\geq  \frac{2 \sqrt{2}^{p+1}}{p \sqrt{\tc_-}^{p-1}}\geq T_0.
\]
 By using the definition of $\mu_0$ in \ref{ro0} and \ref{condparam1} we have
\begin{equation}\label{c0lower}
c_0 \log(\mu_0)=\mathtt{a} c_0 \log(p \gamma^{-\frac{\mathtt{b}}{\mathtt{a}}})> \mathtt{a}\,c_0\stackrel{\ref{c0upper}}{=}\frac{\mathtt{a}}{2 \Xi}\stackrel{\ref{condparam1}}{\geq}  \frac{2 \sqrt{2}^{p+1}}{\,p \sqrt{\tc_-}^{p-1}}.  
\end{equation}
We conclude the proof by showing that taking $\mathtt{a}, \tc_{\pm}$ as in \ref{condparam1}, \ref{condparam2} the conditions \ref{ap1}, \ref{ap2}, \ref{ratiocs}, \ref{ratiocs2}, \ref{bromuro}, \ref{bromuro2}, \ref{pallino}, \ref{pallino2} are satisfied.\\
The conditions \ref{pallino}, \ref{pallino2} imply respectively \ref{ratiocs}, \ref{ratiocs2} if $p$ is taken large enough .\\
By the choice of $\mathtt{r}$ in \ref{condparam2}  the inequality \ref{pallino} is satisfied.\\
If $\tc_-\geq 1$ it is easy to see that, for $p$ large enough, the bounds \ref{pallino2}, \ref{bromuro} and \ref{bromuro2} hold.\\
We prove that, assuming \ref{condparam1}, \ref{condparam2} and taking $p$ large enough, imply \ref{ap1}, \ref{ap2}.\\ The condition \ref{ap1} is equivalent to
\begin{equation}\label{app1}
\tc_+\le 2^{3-p^{-1}} (p!)^{2/p} p^{\mathtt{a} (\frac{1}{2}-\frac{4}{p})-14}
\end{equation}
While \ref{ap2} is equivalent to
\begin{equation}\label{app2}
\tc_+\le 2^{3-\frac{p}{2p-1}} (p!)^{\frac{2}{2p-1}} p^{\mathtt{a} (\frac{5p-12}{4p-2})-\frac{14 p+3}{2p-1}}
\end{equation}
For $p$ large enough the right hand side of \ref{app2} is greater than the r.h.s of \ref{app1}. To obtain the upper bound of $\tc_+$ in \ref{condparam2} from \ref{app1} we consider that, if $p\geq 16$ then $p^{4/p}\le 2$ and this implies
\[
p^{\mathtt{a} (\frac{1}{2}-\frac{4}{p})}\geq \frac{p^{\mathtt{a}/2}}{2}.
\]
\end{proof}

\begin{remark}\label{rmkupperboundc}
The upper bound for $\tc_+$ is not optimal and the interval $[\tc_-, \tc_+]$ can be enlarged. However the approximation argument does not hold for $\tc_+=\mathcal{O}(p^{2\mathtt{a}})$ (see for instance the proof of the bound \ref{bound:Z0} for $Z_0$). This fact is fundamental in the estimate \ref{normT} for the norm at time $T$ of the unstable solution. See Remark \ref{rmk:weak}.
\end{remark}

\subsection{Conclusion of the proof}\label{secconclusion}
In this section we conclude the proof of Theorem \ref{thmwave} by showing that a solution $z(t)$ with initial datum $z(0)=r^{\mu}(0)$, with an opportune choice of $\mu$, undergoes the prescribed growth of its Sobolev norms.\\
We fix $\delta\ll 1$, $\mathcal{C}\gg 1$ and we consider $s>2$.
Recalling the rescaling \ref{rescaling} we consider $\mu=p^{\mathtt{a}}\gamma^{-\mathtt{b}}$ with $\mathtt{a}:=\, p\,p!\,4^p$ (see \ref{condparam1}) and $\mathtt{b}$ as in \ref{ro0}. We consider $\tc\in[\tc_-, \tc_+]$ as in \ref{condparam2}. To simplify the exposition we fix 
\begin{equation}\label{c+}
\tc_+:= p^{\frac{\mathtt{a}}{4} }.
\end{equation}

Let us consider $r(t)$ solution of \ref{hamtime} with $r(0)=\Gamma^{-1} r^{\mu}(0)$. Thanks to the choice of $\mu$ as above we can apply the approximation argument in Proposition \ref{approxarg}.
Let us call
\[
z(t)=\Gamma\big( (r_j e^{\mathrm{i} \omega(j) t})_{j\in\mathbb{Z}} \big).
\]
Now we give a lower bound for $|| z(T) ||_s$. It turns out that it is sufficient to estimate $|z_{\pm p}(T)|$. We give a lower bound for $z_p(T)$, the one for $z_{-p}(T)$ is obtained in the same way.
We have
\begin{equation}\label{triangle}
\begin{aligned}
|z_{p}(T)| &\geq |r_{p}(T)|-|\Gamma_p \big( (r_j e^{\mathrm{i} \omega(j) t})_{j\in\mathbb{Z}} \big)-r_{p}(T) e^{\mathrm{i} \omega(p) T}|\\
&\geq  |r^{\mu}_{p}(T)|-|r_{p}(T)-r^{\mu}_{p}(T)|-|\Gamma_p \big( (r_j e^{\mathrm{i} \omega(j) t})_{j\in\mathbb{Z}} \big)-r_{p}(T) e^{\mathrm{i} \omega(p) T}|.
\end{aligned}
\end{equation}
First we need a lower bound for $|r^{\mu}_{p}(T)|$. 
By \ref{energy} and the rescaling \ref{rescaling} {
\begin{equation}\label{b1}
 |r^{\mu}_{p}(T)|\geq \sqrt{\mathtt{c}}  p^{-1}\, \mu^{-1}.
\end{equation}
}
Now we give an upper bound for $|r_{p}(T)-r^{\mu}_{p}(T)|$.
By the estimates \ref{gammaid} and \ref{stimabase} we have that
\[
|| r(0)-r^{\mu}(0) ||_{\ell^1} \le C_0 \gamma^{-1} (4 \sqrt{\tc})^p \mu^{-p}.
\]
{By using that $\mu=p^{\mathtt{a}}\gamma^{-\mathtt{b}}$ it is easy to see that $|| r(0)-r^{\mu}(0) ||_{\ell^1}\le {\mu}^{-\sigma_1}$ (recall $\sigma_1$ in \ref{sigma1}), provided that 
\begin{equation}\label{cond}
p^{\mathtt{a} (\frac{p}{4}-2)-\frac{11}{4} p-\frac{7}{4}}\geq \sqrt{\tc_+}^{p} .
\end{equation}

The bound \ref{cond} is implied by the choice of $\tc_+$ in \ref{c+} if $p$ is large enough (recall $\mathtt{a}\sim p!$).
Then by Proposition \ref{approxarg} (recall $\sigma_2$ in \ref{sigma2}) 
\begin{equation}\label{diff}
||  r(t)-r^{\mu}(t) ||_{\ell^1}\le 2\mu^{-\sigma_2} \quad \text{for}\,\,t\in [0, T].
\end{equation}
}
Hence
\begin{equation}\label{b2}
|r_{p}(T)-r^{\mu}_{p}(T)|\le 2\mu^{-\sigma_2}.
\end{equation}
We are left with an upper bound for $|\Gamma_p \big( (r_j e^{\mathrm{i} \omega(j) t})_{j\in\mathbb{Z}} \big)-r_{p}(T) e^{\mathrm{i} \omega(p) T}|$.
By \ref{diff} and \ref{stimabase} we have
\[
|r_{p}(T)|\le || r(T)||_{\ell^1}\le || r^{\mu}(T)||_{\ell^1}+|| r(T)-r^{\mu}(T) ||_{\ell^1}\le 8\sqrt{\mathtt{c}} \mu^{-1},
\]
where the last inequality holds provided that
\[
\mu^{1-\sigma_2}\le 2 \sqrt{\tc_-},
\]
that is equivalent to
\[
2 p^{\frac{3}{4}\mathtt{a} (p+1)}\geq  2^{\frac{1}{p-1}} \gamma^{\mathtt{b}(\frac{3}{4}p+\frac{1}{2})}.
\]
Since $\tb>0$ this holds for $p$ large enough.
By using the estimate \ref{gammaid} 
we have
\begin{equation}\label{b3}
|\Gamma_p \big( (r_j e^{\mathrm{i} \omega(j) t})_{j\in\mathbb{Z}} \big)-r_{p}(T) e^{\mathrm{i} \omega(p) T}|\le (8\sqrt{\tc})^p\,C_0 \gamma^{-1} \mu^{-p}\le \mu^{-\sigma_1}
\end{equation}
provided that \ref{cond} holds and $p$ is large enough.
By \ref{triangle} and collecting the bounds \ref{b1}, \ref{b2}, \ref{b3}
we obtained
\[
|z_{ p}(T)|\ge \sqrt{\mathtt{c}} \,p^{-1}\,\mu^{-1}-2\mu^{-\sigma_2}-\mu^{-\sigma_1}\ge  \sqrt{\mathtt{c}} \frac{p^{-1}}{2}\mu^{-1} ,
\]
where the last inequality holds for $p\geq 4$, since $\sigma_1>3/2$.
 This implies that
\begin{equation}\label{normT}
|| z(T) ||_s^2\geq  \big(|z_{p}(T)|^2+|z_{ -p}(T)|^2 \big) \,p^{2 s}\ge (\mathtt{c}/2)\,\mu^{-2} p^{2 (s-2)}.
\end{equation}
\begin{remark}\label{rmk:weak}
Since $\mu=p^{\mathtt{a}} \gamma^{-\mathtt{b}}$, by \ref{normT} we have $|| z(T) ||_s^2 \geq  (\mathtt{c}/2)\, p^{2 (s-2-\mathtt{a})} \gamma^{2\mathtt{b}}$. If $s$ is kept fixed then we cannot ensure that taking $p$ large enough $|| z(T)||_s$ is arbitrarily large.\\ We observe that $\mathtt{\tc}\le \tc_+$ and, by Proposition \ref{approxarg}, $\tc_+$ behaves asymptotically like $\textstyle{p^{k \mathtt{a}}}$ with $k\in (0, 1)$ (see \ref{condparam2} and recall Remark \ref{rmkupperboundc}). Thus $\tc\,p^{-2\mathtt{a}}$ is not uniformly (in $p$) bounded from below. 
\end{remark}
Regarding the Sobolev norm at time zero of $z(t)$, by \ref{energy} and choosing $\varepsilon=\varepsilon(p, s)$ in \ref{parameters} such that
\begin{equation}\label{epschoice}
\varepsilon=\varepsilon_0 p^{-2s}, \qquad 0<\varepsilon_0\le \frac{\mathtt{c}_-}{1-p^{1-2s}}
\end{equation}
we have
\begin{equation}\label{norm0}
|| z(0) ||_s^2=|| r^{\mu}(0) ||^2_s=2 \mu^{-2} \big( (\mathtt{c}- \varepsilon\,p)+\varepsilon p^{2 s} \big)\le 4 \mathtt{c} \mu^{-2} .
\end{equation}
We observe that we can also provide the lower bound
\[
|| z(0) ||_s^2\geq 2\tc \mu^{-2}.
\]
Since
\[
2\sqrt{\tc} \mu^{-1}\le 2 p^{-\mathtt{a}} \sqrt{\tc_+} \gamma^{\mathtt{b}}\stackrel{\ref{c+}}{\le} 2 p^{-\frac{7}{8}\mathtt{a}},
\]
to obtain $|| z(0) ||_s\le \delta$ we impose that
\begin{equation}\label{pdelta}
F_{\gamma}(p)\le \frac{\delta}{2}\,, \quad F_{\gamma}(p):= p^{-\frac{7}{8}\,p\,p!\,4^p} \gamma^{\frac{1}{\frac{p}{2}-4}}.
\end{equation}
By \ref{normT} and \ref{norm0} the ratio between the Sobolev norms at time $t=T$ and $t=0$ has the following lower bound
\begin{equation}\label{ratioKG}
\frac{|| z(T) ||_s}{|| z(0) ||_s}\ge \frac{p^{s-2}}{2 \sqrt{2}}.
\end{equation}
To obtain \ref{factorgrowthC} we need to impose
\begin{equation}\label{pK}
p^{s-2}\geq  2 \sqrt{2} \mathcal{C}.  
\end{equation}
Recalling that $s>2$, the conditions \ref{pdelta} and \ref{pK} can be satisfied by taking $p\geq p_0$ for some $p_0=p_0(s, \delta, \mathcal{C})$ large enough.\\
By \ref{time}, \ref{Tzero}, \ref{condparam2}  we have that
\begin{equation}\label{Tfinal}
\begin{aligned}
\frac{2^{p-1}}{p} || z(0)||_s^{1-p}\le T=\mu^{p-1} T_0 & \le (\sqrt{2\mathtt{c}})^{p-1}T_0\, || z(0)||_s^{1-p} \\
& \le \frac{{2}^{p+2} }{p}  || z(0)||_s^{1-p}.
\end{aligned}
\end{equation}
This proves \ref{timeteo}.

\subsection{Proof of Theorem \ref{stronginstability}}
The proof follows the same steps of the proof of Theorem \ref{thmwave}. The difference relies in (i) the choice of the index $s$ to obtain the lower bound \ref{strongb} for the norm of the solution $z(t)$ at time $T$; (ii) the choice of $p$ in order to get a lower bound on the norm of the initial datum.\\
Let us fix $\delta\ll 1$, $\mathcal{C}\gg 1$. We choose $\mathtt{a}, \mathtt{b}$ and $\mathtt{c}$ as in previous section. We consider $\mu:=p^{\mathtt{a}}\gamma^{-\mathtt{b}}$ with $p=p(\delta, \gamma)$ large enough such that \ref{pdelta} holds and (recall the definition of $F_{\gamma}$ in \ref{pdelta})
\begin{equation}\label{lowb}
F_{\gamma}(p) \geq \frac{\delta}{2\sqrt{2}}.
\end{equation}
\begin{remark}
The conditions \ref{lowb} and \ref{pdelta} can be satisfied at the same time for $C_1\le p\le C_2$ with $C_i=C_i(\gamma, \delta)$, $i=1, 2$. We point out that $p$ can be considered larger than $p_0$ given in Proposition \ref{approxarg} if $\delta$ is taken small enough.
\end{remark}
The \ref{lowb} provides the lower bound for the norm of the initial datum 
\[
|| z(0)||_s\geq \frac{\delta}{2}.
\] We choose $s$ such that \ref{pK} holds. This holds for $s\geq s_0$ with $s_0=s_0(p, \mathcal{C})$ large enough. We observe that, from \ref{norm0}, we can ensure that $|| z(0)||_s\le \delta$ for all $s$. Indeed it is sufficient to choose $\varepsilon=\varepsilon(s, p)$ small enough. The choice of $\varepsilon$ does not affect the other parameters of the problem. This is due to the fact that the transfer of energy occurs via diffusion channels.\\
 Thanks to the lower bound on $||z(0)||_s$ we obtain
\[
|| z(T)||_s\geq \mathcal{C} || z(0)||_s \geq \frac{\mathcal{C} \delta}{2}.
\]
It is enough to choose $\mathcal{C}=2\,K\, \delta$ to conclude.\\
Now we show that there exists an unstable solution of \ref{wave} with an upper bound on the diffusion time like \ref{timeC}.
Assume that $K=\delta^{-\alpha}$ for some $\alpha>1$. If we choose 
\[
\mu^{-1}=\frac{\delta}{4 \sqrt{2\tc}}.
\]
then
\[
T=\mu^{p-1} T_0\le (\delta^{-1})^{p-1} \,8^p\,\mathtt{r}^{\frac{p-1}{2}}\,p^{-1}\le \mathcal{C}^{p-1} \delta^{\alpha(p-1)} \,\sqrt{2}^{7p-1}\,p^{-1}.
\]
We have
\[
\lim_{p\to \infty} \left(\sqrt{2}^{7p-1}\,p^{-1}\right)^{\frac{1}{p-1}}=\frac{1}{8\sqrt{2}}.
\]
Then there exists $\alpha_0>0$ a pure constant large enough such that for $\alpha\geq \alpha_0$ and $\delta$ small enough $T\lesssim \mathcal{C}^{p-1}$.

\section{Proof of Theorem \ref{thmnls}}
We follow the same steps of the proof of Theorem \ref{thmwave} shown in Section \ref{sec:wave}.
First we build the convolution potential $V_N$. To simplify the notation we write $V_N=V$.
We consider the tangential set

\begin{equation}\label{Snls}
S:=\{ k_1, k_2, k_3, k_4  \}\subset\mathbb{Z}
\end{equation} with
\begin{equation}\label{Snls2} 
k_1, k_3>0, \quad k_2<0, \quad k_4:=k_1-k_2+k_3+N.
\end{equation}
Let us also assume that
\begin{equation}\label{assummax}
k_3:=\max\{ k_1, | k_2 |, k_3\}\le \sqrt{N}.
\end{equation}
Let us consider $\mathtt{q}:=(\mathtt{q}_1, \mathtt{q}_2, \mathtt{q}_3)\in [1, 2]^3$ such that
\begin{equation}\label{irraz}
|\mathtt{q}\cdot \ell+k|\geq \frac{\gamma}{\langle \ell \rangle^{\tau}} \quad \forall \ell\in\mathbb{Z}^3, \quad 0<|\ell|\le 9, \quad \forall k\in\mathbb{Z}, \quad (\ell, k)\neq (0, 0)
\end{equation}
with $\gamma\in (0, 1)$ and $\tau>0$. It is well known that for $\tau$ large enough the set of vectors in $[1, 2]^3$ satisfying \ref{irraz} has positive measure.  We set
\[
V_j:=\begin{cases}
\mathtt{q}_i-k_i^2, \qquad\qquad\qquad j=k_i, \quad i=1, 2, 3,\\
\mathtt{q}_1-\mathtt{q}_2+\mathtt{q}_3-k_4^2, \quad \,\, \,j=k_4,\\
0 \qquad \qquad \qquad \qquad \,\, \,\,\,\text{otherwise}.
\end{cases}
\]

We consider the Fourier expansion $u=\sum_{j\in\mathbb{Z}} u_j\,e^{\mathrm{i} j x}$ with $u_j:=\frac{1}{2\pi} \int_{\mathbb{T}} u\,e^{-\mathrm{i} j x}\,dx$.
The equation \ref{nls} is Hamiltonian with respect to the symplectic structure 
\[
-\mathrm{i} \sum_{j\in\mathbb{Z}} d u_j\wedge d\overline{u_j}.
\] The Hamiltonian is given by 
$
H=H^{(2)}+H^{(4)}
$ where
\begin{equation}\label{nlsham}
\begin{aligned}
H^{(2)}(u_j, \overline{u_j}):=&\sum_{j\in\mathbb{Z}} \omega(j) u_j\,\overline{u_j} , \\
 H^{(4)}(u_j, \overline{u_j}):=&\sum_{j_1-j_2+j_3-j_4=\pm N} u_{j_1}\,\overline{u_{j_2}}\,u_{j_3}\,\overline{u_{j_4}}
\end{aligned}
\end{equation}
with
\[
\omega(j):=\begin{cases}
\mathtt{q}_j \qquad\qquad \qquad \quad j=k_i, \quad i=1, 2, 3,\\
\mathtt{q}_1-\mathtt{q}_2+\mathtt{q}_3, \qquad \,j=k_4,\\
j^2 \qquad \qquad \qquad \,\,\,\,\, \text{otherwise}.
\end{cases}
\]
\begin{remark}
As in the case of the wave equation (see Remark \ref{rmkdecouple}), the tangential frequencies are irrational real numbers while the normal ones are integers.
\end{remark}
The equation \ref{nls} can be written as an infinite dimensional system of ODEs for the Fourier coefficients
\[
-\mathrm{i} \dot{u}_j=\omega(j) u_j+\sum_{j_1-j_2+j_3-j=\pm N} u_{j_1}\,\overline{u_{j_2}}\,u_{j_3}, \qquad j\in\mathbb{Z}.
\]
We consider the solution of the linear problem
\[
-\mathrm{i} \dot{u}_j=\omega(j) u_j, \quad j\in\mathbb{Z},
\]
obtained by exciting the modes in $S$, namely
\begin{equation}\label{bif2}
w(t, x)=\sum_{k\in S} a_k \,e^{\mathrm{i} (\omega(k) t+ k\, x)}.
\end{equation}
By the definition of the linear frequencies of oscillation and by \ref{irraz} the orbit $w(\cdot , x)$ is conjugated to a quasi-periodic motion on an embedded $4$-d resonant torus that fills densely a lower dimensional manifold. 

\smallskip

As in Section \ref{sec:bnf} we construct a change of coordinates that puts (partially) in normal form the Hamiltonian \ref{nlsham}. Again we work with the $\ell^1$-topology. Recall \ref{elle1} and Definition \ref{def}.
\begin{proposition}\label{wbnfnls}
Recall \ref{nlsham}.
There exists $\eta>0$ small enough such that there exists a symplectic change of coordinates $\Gamma\colon B_{\eta}\to B_{2\eta}$ which takes the Hamiltonian $H$ into its (partial) Birkhoff normal form up to order $4$, namely
\begin{equation}\label{hamBNFcoord2}
H\circ \Gamma=H^{(2)}+H_{\mathrm{res}}+H^{(4, \geq 2)}+R
\end{equation}
where:\\
(i) the resonant Hamiltonian is given by
\begin{equation}\label{resham2}
H_{\mathrm{res}}:=\Pi_{\mathrm{Ker}} H^{(4, 0)}=2 \Re\big(u_{k_1}\,\overline{u_{k_2}}\,u_{k_3}\,\overline{u_{k_4}}\big).
\end{equation}
(ii) The remainder $R$ is such that
\begin{equation}\label{boundR2}
|| X_R ||_{\eta}\lesssim \gamma^{-1} \eta^5+\gamma^{-2} \eta^7.
\end{equation}
Moreover the map $\Gamma$ is invertible and close to the identity
\begin{equation}\label{gammaid2}
|| \Gamma^{\pm 1}-\mathrm{Id} ||_{\eta}\lesssim \gamma^{-1} \eta^{3}.
\end{equation}

\end{proposition}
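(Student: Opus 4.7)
The plan is to mirror the proof of Proposition \ref{wbnf}. First I would define the generating Hamiltonian
\[
F := \sum_{\substack{(\alpha, \beta) \in \mathcal{A}_{4, \le 1} \\ \Omega(\alpha, \beta) \neq 0}} \frac{\mathrm{i}\, H^{(4)}_{\alpha, \beta}}{\Omega(\alpha, \beta)}\, u^\alpha \overline{u}^\beta ,
\]
and set $\Gamma := \Phi^1_F$; by construction $F$ satisfies the homological equation $\{F, H^{(2)}\} + H^{(4, \le 1)} = \Pi_{\mathrm{Ker}} H^{(4, \le 1)}$. Writing $\Omega(\alpha, \beta) = \mathtt{q} \cdot \ell + k$ for $\ell \in \mathbb{Z}^3$, $k \in \mathbb{Z}$, and noting that for $(\alpha, \beta) \in \mathcal{A}_{4, \le 1}$ the coefficient of each $\mathtt{q}_i$ is bounded by $\alpha_{k_i} + \beta_{k_i} + \alpha_{k_4} + \beta_{k_4} \le 3$ (so $|\ell| \le 9$), the Diophantine assumption \ref{irraz} yields the uniform small divisor bound $|\Omega(\alpha, \beta)| \ge \gamma / \langle 9 \rangle^{\tau}$ whenever $\ell \neq 0$.

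The crucial algebraic step I would carry out next is the identification $\Pi_{\mathrm{Ker}} H^{(4, \le 1)} = H_{\mathrm{res}}$. The Diophantine bound forces any resonant $(\alpha, \beta)$ to satisfy $\ell = 0$, equivalently $\alpha_{k_1} - \beta_{k_1} = -d$, $\alpha_{k_2} - \beta_{k_2} = d$, $\alpha_{k_3} - \beta_{k_3} = -d$ with $d := \alpha_{k_4} - \beta_{k_4}$. Summing over the tangential indices gives $|\alpha_S| - |\beta_S| = 0$, which is incompatible with the presence of a single normal slot (which would force $|\alpha_S| - |\beta_S| = \pm 1$, since $|\alpha| = |\beta| = 2$); hence $\Pi_{\mathrm{Ker}} H^{(4, 1)} = 0$. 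For $(\alpha, \beta) \in \mathcal{A}_{4, 0}$ I would then plug these relations into the momentum-like constraint $\sum_{i} k_i(\alpha_{k_i} - \beta_{k_i}) = \pm N$ and invoke the structural identity $k_4 = k_1 - k_2 + k_3 + N$: the constraint collapses to $d\, N = \pm N$, so $d = \pm 1$. A short enumeration under the constraints $|\alpha| = |\beta| = 2$ and non-negativity of the exponents then leaves exactly the two conjugate monomials $u_{k_2} u_{k_4} \overline{u_{k_1}}\, \overline{u_{k_3}}$ and $u_{k_1} u_{k_3} \overline{u_{k_2}}\, \overline{u_{k_4}}$, producing \ref{resham2}.

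With these identities in hand, the remaining bounds are routine applications of Lemma \ref{lemmaGT}. The small divisor bound implies $[ \! [ F ] \! ] \lesssim \gamma^{-1}$, hence $\| X_F \|_\eta \lesssim \gamma^{-1} \eta^3$; a standard bootstrap on $\Phi^t_F$ confirms that $\Gamma : B_\eta \to B_{2\eta}$ for $\eta$ small enough, and the mean value theorem yields \ref{gammaid2}. Taylor expanding $H \circ \Phi^t_F$ at $t = 0$ and using the homological equation produces
\[
R = \{F, H^{(4)}\} - \tfrac{1}{2} \int_0^1 (1 - t) \{F, \Pi_{\mathrm{Rg}} H^{(4, \le 1)}\} \circ \Phi^t_F\, dt + \tfrac{1}{2} \int_0^1 (1 - t) \{F, \{F, H^{(4)}\}\} \circ \Phi^t_F\, dt ,
\]
and combining the Poisson bracket estimate \ref{lempoiss} with the vector field bound \ref{lemvec} from Lemma \ref{lemmaGT} yields the two contributions $\gamma^{-1} \eta^5$ and $\gamma^{-2} \eta^7$ in \ref{boundR2}. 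I expect the main obstacle to be the algebraic step of the second paragraph, where the structural relation $k_4 = k_1 - k_2 + k_3 + N$ must be exploited carefully to isolate $d = \pm 1$ and pin down precisely the two resonant monomials; the rest is bookkeeping.
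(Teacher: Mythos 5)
Your proposal is correct and follows the same route as the paper: the paper's own proof explicitly says ``the proof follows the same lines of the proof of Proposition \ref{wbnf}'' and only spells out the two facts you isolate, namely $\Pi_{\mathrm{Ker}}H^{(4,1)}=0$ and the identification of the two resonant monomials in $\mathcal{A}_{4,0}$. Your expansion of the enumeration is faithful: the parity argument ($|\alpha_S|-|\beta_S|=\pm1$ for one normal slot, while $\ell=0$ forces $\sum_{i\in S}(\alpha_i-\beta_i)=0$) correctly replaces the paper's terser small-divisor remark for $\mathcal{A}_{4,1}$, and the reduction to $dN=\pm N$, hence $d=\pm1$, via $k_4=k_1-k_2+k_3+N$ is exactly what is needed to pin down the two conjugate resonant monomials.

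One small quibble that does not affect the outcome: your intermediate estimate $\alpha_{k_i}+\beta_{k_i}+\alpha_{k_4}+\beta_{k_4}\le 3$ is not quite right on $\mathcal{A}_{4,\le1}$ (take $\alpha_{k_1}=2,\,\beta_{k_4}=2$, which gives $4$); the correct bookkeeping is $|\alpha_{k_i}-\beta_{k_i}|\le 2$ and $|\alpha_{k_4}-\beta_{k_4}|\le 2$ since each of $\alpha_{k_i},\beta_{k_i}$ is at most $2$, which still yields $|\ell|\le 9$ (in fact $|\ell|_1\le 8$), so the invocation of \ref{irraz} goes through unchanged. Incidentally, the paper's brief assertion that $3\le|\ell|$ on $\mathcal{A}_{4,1}$ is itself an overstatement (for instance $u_ju_{k_4}\overline{u_{k_1}}\overline{u_{k_4}}$ with $j=k_1+N\notin S$ gives $|\ell|_1=1$), but only $\ell\neq 0$ is used, which your parity argument guarantees.
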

\begin{proof}
The proof follows the same lines of the proof of Proposition \ref{wbnf}. Actually the proof is easier since the parameter $N$, that we need to control, does not enter in the estimates for the map $\Gamma$ and the remainder $R$. The only thing that we need to prove is that $\Pi_{\mathrm{Ker}}H^{(4, 1)}=0$ and formula \ref{resham2}.\\
If $(\alpha, \beta)\in\mathcal{A}_{4, 1}$ then 
\[
\Omega(\alpha, \beta)=\mathtt{q}\cdot \ell\pm j^2
\]
for some $\ell=\ell(\alpha, \beta)\in \mathbb{Z}^3$ with $3\le |\ell|\le 9$ and $j\notin S$. Since $j^2$ is an integer, by \ref{irraz} we have $\Omega(\alpha, \beta)\geq \gamma\,9^{-\tau}>0$. This proves that $\Pi_{\mathrm{Ker}}H^{(4, 1)}=0$. If $(\alpha, \beta)\in\mathcal{A}_{4, 0}$ by \ref{irraz} and the choice of the potential the only resonant monomials are 
$
u_{k_1}\,\overline{u_{k_2}}\,u_{k_3}\,\overline{u_{k_4}}
$ and its complex conjugate.

\end{proof}

We introduce the rotating coordinates
\[
u_j=r_j \,e^{\mathrm{i} \omega(j) t}
\]
in order to remove the quadratic part of the Hamiltonian $H\circ \Gamma$. Then $r$ satisfies the equation associated to the Hamiltonian 
\begin{equation}\label{hamtime2}
\mathcal{H}=H_{\mathrm{res}}+\mathcal{Q}(t)+\mathcal{R}(t)
\end{equation}
where
\begin{equation*}\label{hamrotatingnls}
\begin{aligned}
&\mathcal{Q}\big((r_j)_{j\in\mathbb{Z}}, t\big):=H^{(4, \geq 2)}\big( (r_j \,e^{\mathrm{i} \omega(j) t})_{j\in\mathbb{Z}}\big),\\
&\mathcal{R}\big((r_j)_{j\in\mathbb{Z}}, t\big):=R\big( (r_j \,e^{\mathrm{i} \omega(j) t})_{j\in\mathbb{Z}}\big).
\end{aligned}
\end{equation*}
We study the dynamics of the resonant Hamiltonian $H_{\mathrm{res}}$. We observe that the finite dimensional subspace
\[
\mathcal{U}_S:=\{ r\colon \mathbb{Z}\to\mathbb{C} \,|\, r_j=0\,\,\,j\notin S\}
\]
is invariant by the flow of $H_{\mathrm{res}}$. We introduce the following action-angle variables on $\mathcal{U}_S$
\[
r_{k_j}=\sqrt{I_j}\,e^{\mathrm{i} \theta_j}, \quad j=1, 2, 3, 4.
\]
The Hamiltonian $H_{\mathrm{res}}$ now reads as
\[
\mathcal{G}:=2 \sqrt{I_1\,I_2\, I_3\,I_4}\cos(\theta_1-\theta_2+\theta_3-\theta_4).
\]
\begin{remark}
The Hamiltonian $\mathcal{G}$ commutes with the mass $\mathtt{M}:=I_1+I_2+I_3+I_4$.
\end{remark}

\begin{lemma}\label{sistdinnls}
Let $\varepsilon>0$ be arbitrarily small and let $\mathtt{c}>\tfrac{8}{3} \varepsilon$. There exists an orbit of $\mathcal{G}$
\[
g_{\varepsilon, \mathtt{c}}(t)=(\theta_1(t), \dots, \theta_4(t), I_1(t), \dots, I_4(t))
\]
such that
\[
I_1(0)=I_2(0)= I_3(0)=\frac{\mathtt{c}-\varepsilon}{3}, \quad I_4(0)=\varepsilon
\]
\[
I_1(T_0)=I_3(T_0)=\frac{\mathtt{c}}{6}+\frac{2\varepsilon}{3}, \quad I_2(T_0)=\frac{\mathtt{c}}{2}-\frac{4 \varepsilon}{3}, \quad I_4(T_0)=\frac{\mathtt{c}}{6}
\]
with
\[
T_0\le \frac{6}{\mathtt{c} }.
\]
\end{lemma}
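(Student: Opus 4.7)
The plan is to mirror the proof of Lemma~\ref{sistdin}: exploit the linear conservation laws of $\mathcal{G}$ to reduce the system to one degree of freedom, restrict to an invariant section on which the angle dynamics freezes, and estimate the transit time via a scalar integral.

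First, I would derive the equations of motion. Writing $\phi := \theta_1-\theta_2+\theta_3-\theta_4$, a direct computation gives
\[
\dot{I}_1 \,=\, \dot{I}_3 \,=\, -\dot{I}_2 \,=\, -\dot{I}_4 \,=\, 2\sqrt{I_1 I_2 I_3 I_4}\,\sin\phi,
\]
so that $I_1 - I_3$, $I_2 - I_4$ and the mass $\mathtt{M}$ are conserved. Plugging in the prescribed endpoints shows that both configurations satisfy $I_1 - I_3 = 0$, $I_2 - I_4 = (\mathtt{c}-4\varepsilon)/3$, $I_3 + I_4 = (\mathtt{c}+2\varepsilon)/3$ and $\mathtt{M} = \mathtt{c}$, so they lie on the same common level set. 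Setting $J := I_4$, one can then parametrize
\[
I_1 = I_3 = \tfrac{\mathtt{c}+2\varepsilon}{3} - J,\qquad I_2 = J + \tfrac{\mathtt{c}-4\varepsilon}{3}.
\]

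Next, I would fix an invariant section. Since each $\dot{\theta}_j$ is proportional to $\cos\phi$, one has $\dot\phi\propto\cos\phi$, and so $\{\phi = \pm\pi/2\}$ is invariant (consistently with the fact that these sections lie on the energy level $\mathcal{G}=0$). I choose the sign so that the resulting $\dot J$ carries $J$ from $\varepsilon$ to $\mathtt{c}/6$ (this corresponds to $J$ increasing when $\mathtt{c} > 6\varepsilon$ and decreasing otherwise). On the chosen section the full system collapses to the scalar ODE
\[
\dot{J} \,=\, \pm\, 2\,\Big(\tfrac{\mathtt{c}+2\varepsilon}{3}-J\Big)\sqrt{J\Big(J + \tfrac{\mathtt{c}-4\varepsilon}{3}\Big)},
\]
whose right-hand side is Lipschitz and does not vanish in the open interval between the two endpoint values of $J$. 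The hypothesis $\mathtt{c} > 8\varepsilon/3$ is exactly what is needed to keep $I_2(t) = J(t)+(\mathtt{c}-4\varepsilon)/3$ strictly positive up to the target $J=\mathtt{c}/6$. Separating variables then expresses $T_0$ as a one-dimensional integral.

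The last step is the time estimate. Along the trajectory, the factor $(\mathtt{c}+2\varepsilon)/3 - J$ is bounded below by $\mathtt{c}/6$, and $J + (\mathtt{c}-4\varepsilon)/3$ is likewise controlled from below by a constant times $\mathtt{c}$ (using $\mathtt{c}>8\varepsilon/3$). The integrand is therefore bounded by a constant multiple of $\mathtt{c}^{-3/2}J^{-1/2}$, and integrating $J^{-1/2}$ over an interval of length at most $\mathtt{c}/6$ yields $T_0 \le 6/\mathtt{c}$ after arithmetic. I do not anticipate a genuine obstacle: the entire argument is a direct one-dimensional reduction once the three linear first integrals have been used, and the only mild subtlety is choosing the sign of $\sin\phi$ to point in the correct direction according as $\varepsilon \lessgtr \mathtt{c}/6$.
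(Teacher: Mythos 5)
Your proposal is correct and follows essentially the same route as the paper's proof of Lemma~\ref{sistdin}/\ref{sistdinnls}: the paper performs the reduction via an explicit linear symplectic change of variables with matrix $A\in SL(4,\mathbb{Z})$ (after which $J_1,J_2,J_3$ are manifestly constants of motion), whereas you extract the three linear first integrals directly from the equations of motion, but the resulting one-dimensional ODE for $J=I_4$ on the section $\{\phi=\pm\pi/2\}$ and the subsequent time estimate coincide. The only addition worth retaining is your remark that the sign of $\sin\phi$ must be chosen according to whether $\varepsilon$ is below or above $\mathtt{c}/6$ — a harmless point the paper glosses over since $\varepsilon$ is taken arbitrarily small.
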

\begin{proof}
We apply the following linear symplectic change of variables
\[
\varphi=A \theta,\quad J=A^{-T}\,I, \quad A:=\begin{pmatrix}
1 & 0 & 0 & 0\\
0 & 1 & 0 & 0\\
0 & 0 & 1 & 0\\
-1 & 1 &-1 & 1
\end{pmatrix}.
\]
We observe that the matrix $A\in SL(4, \mathbb{Z})$, hence this defines a linear automorphism of the torus $\mathbb{T}$.
The new Hamiltonian is given by
\[
\mathcal{G}_*=2 \sqrt{(J_1-J_4)\,(J_2+J_4)\,(J_3-J_4)\,J_4}\cos(\varphi_4).
\]
We observe that $J_1, J_2, J_3$ are constants of motion.
Then we fix
\[
J_i=\alpha:=\frac{\mathtt{c}-\varepsilon}{3}+\varepsilon \quad i=1, 3, \qquad J_2=\beta:=\frac{\mathtt{c}-\varepsilon}{3}-\varepsilon
\]
and we look for solutions traveling along the following diffusion channel
\[
\{ ( \alpha-I_4, \beta+I_4, \alpha-I_4, I_4  ) : I_4\in (0, \alpha)  \},
\]
which is contained in the mass level
\begin{equation}\label{massls}
\{ J_1+J_2+J_3=\mathtt{M}=\mathtt{c} \}.
\end{equation}
If we restrict to the invariant section $\{ \varphi_4=\pi/2 \}$ the equation of motion for $J_4=I_4$ is
\[
\dot{J_4}=2 \left(\alpha-J_4 \right) \sqrt{\,\left(\beta+J_4\right)\,\,J_4}.
\]
Reasoning as in Lemma \ref{sistdin} we can conclude that there exists an orbit such that
\[
J_4(0)=\varepsilon, \quad J_4(T_0)=\frac{\mathtt{c}}{6}
\]
with
\[
T_0=\frac{1}{2}\int^{\mathtt{c}/6}_{\varepsilon} \frac{1}{\,(\alpha-J_4) \sqrt{(\beta+J_4)\,J_4}}\,dJ_4\le \frac{3}{\mathtt{c}} \int_0^{\mathtt{c}/6} \frac{1}{\sqrt{(\frac{\mathtt{c}}{2}+J_4) \,J_4}} \,d J_4\le \frac{6}{\mathtt{c}}.
\]

\end{proof}
\begin{remark}\label{rmkepsindip}
The diffusion time $T_0$ has an upper bound that do not depend on $\varepsilon=I_4(0)$.
\end{remark}
We set 
\begin{equation}\label{parameters2}
0<\varepsilon\le \frac{\tc}{2^{2s} N^s-1}.
\end{equation}
We define $b(\varepsilon, \mathtt{c}; t, x)=b(t, x)=\sum_{j\in\mathbb{Z}} b_j(t) e^{\mathrm{i} j x}$ with 
\[
b_j(t):=
\begin{cases}
\sqrt{I_j(t)} \,e^{\mathrm{i}\theta_j(t)} \quad j\in S\\
0 \qquad \qquad \quad\,\,\,\,\, \text{otherwise}.
\end{cases}
\]
Then the function $b(t, x)$ is a solution of $H_{\mathrm{res}}$ such that
\begin{equation}\label{taorm}
\begin{aligned}
|b_{k_1}(0)|^2=|b_{k_2}(0)|^2=|b_{k_3}(0)|^2=\frac{\mathtt{c}- \varepsilon}{3}, \quad  &|b_{k_4}(0)|^2=\varepsilon,\\
 |b_{k_1}(T_0)|^2=|b_{k_3}(T_0)|^2=\frac{\mathtt{c}}{6}+\frac{2 \varepsilon}{3}, \quad |b_{k_2}(T_0)|^2=\frac{\mathtt{c}}{2}-\frac{4 \varepsilon}{3}, \quad  &|b_{k_4}(T_0)|^2=\frac{\mathtt{c}}{6}.
\end{aligned}
\end{equation}
In particular from the proof of Lemma \ref{sistdinnls} we deduce that
\begin{equation}\label{energy2}
\sup_{t\in [0, T_0]} |b_{k_i}(t)|^2<{\mathtt{c}}, \quad i=1, 2, 3, 4.
\end{equation}

We follow the same procedure of Section \ref{sec:approx}.
 The solutions $u(t, x)$ of $H_{\mathrm{res}}$ are invariant under the rescaling
\[
u(t, x)\to \mu^{-1} u(\mu^{-2} t, x).
\]
Then we consider the rescaled solution 
\begin{equation}\label{rescaling2}
r^{\mu}(t, x):=\mu^{-1} b(\mu^{-2} t, x).
\end{equation}
The diffusion time is rescaled in the following way
\begin{equation}\label{time2}
T=\mu^{2} T_0 \le \frac{6\,\mu^{2}}{\mathtt{c}}\,.
\end{equation}

By \ref{energy2} we have
\begin{equation}\label{stimabase2}
|| r^{\mu} ||_{\ell^1}\le 4 \sqrt{\mathtt{c}} \mu^{-1}. 
\end{equation}

\begin{proposition}\label{approxarg2}
Let $\gamma\in (0, 1)$ be the constant in \ref{irraz} and $\tc\geq 1$.
There exists $C_0>0$ large enough such that
for all $\mu\geq \mu_0:=C_0 \gamma^{-2} \tc^5$  
 we have the following.
If $r(t)$ is a solution of \ref{hamtime} such that 
\[
 ||  r(0)-r^{\mu}(0) ||_{\ell^1}\le \mu^{-5/2} ,
 \] then
\[
|| r(t)-r^{\mu}(t) ||_{\ell^1}\le 2 \mu^{-3/2}, \quad \text{for}\,\,t\in[0, T].
\]
\end{proposition}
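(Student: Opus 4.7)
The plan is to run the same bootstrap/Gronwall scheme as in Proposition \ref{approxarg}, but with much simpler book-keeping since only the Diophantine constant $\gamma$ plays the role of the large parameter $p$ in the wave case, and since $H_{\mathrm{res}}$ is merely quartic. First I set $\xi(t) := r(t) - r^{\mu}(t)$. Since $r^{\mu}$ solves the autonomous system with Hamiltonian $H_{\mathrm{res}}$ while $r$ solves \ref{hamtime2}, a short computation (Taylor expanding $X_{H_{\mathrm{res}}}$ around $r^{\mu}$) gives
\begin{equation*}
\dot{\xi} = Z_0(t) + Z_1(t)\,\xi + Z_2(t,\xi),
\end{equation*}
where $Z_0 := X_{\mathcal{R}}(r^{\mu})$, $Z_1 := DX_{H_{\mathrm{res}}}(r^{\mu})$, and $Z_2$ collects the remaining quadratic-and-higher pieces, namely the Taylor remainder of $X_{H_{\mathrm{res}}}$, the difference $X_{\mathcal{R}}(r)-X_{\mathcal{R}}(r^{\mu})$ and the difference $X_{\mathcal{Q}}(r)-X_{\mathcal{Q}}(r^{\mu})$. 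Differential Minkowski then yields
\begin{equation*}
\tfrac{d}{dt}\|\xi\|_{\ell^1} \le \|Z_0\|_{\ell^1} + \|Z_1\xi\|_{\ell^1} + \|Z_2\|_{\ell^1}.
\end{equation*}

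Next I estimate the three terms on the ball $\|r^{\mu}\|_{\ell^1}\lesssim \sqrt{\mathtt{c}}\,\mu^{-1}$ (from \ref{stimabase2}) together with the bootstrap assumption $\|\xi\|_{\ell^1}\le 2\mu^{-3/2}$. Using \ref{boundR2} I get $\|Z_0\|_{\ell^1}\lesssim \gamma^{-1}\mathtt{c}^{5/2}\mu^{-5}+\gamma^{-2}\mathtt{c}^{7/2}\mu^{-7}$; using Lemma \ref{lemmaGT} applied to the quartic $H_{\mathrm{res}}$ I get $\|Z_1\xi\|_{\ell^1}\lesssim \mathtt{c}\,\mu^{-2}\|\xi\|_{\ell^1}$; and the Taylor remainder and the difference on $\mathcal{R}$ contribute, via the bootstrap, a term bounded by $\mathtt{c}\,\mu^{-2}\|\xi\|_{\ell^1}$ (possibly with extra $\gamma^{-1},\gamma^{-2}$ factors that are absorbed in $\mu_0$).

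The only delicate piece is $X_{\mathcal{Q}}(r)-X_{\mathcal{Q}}(r^{\mu})$, which I expect to be the main obstacle: the Hamiltonian $\mathcal{Q}=H^{(4,\geq 2)}$ has the same degree as $H_{\mathrm{res}}$, so the naive bound $\mathtt{c}\,\mu^{-2}\|\xi\|_{\ell^1}$ would \emph{not} produce any small factor in $\mu$. To overcome this I use, exactly as in the proof of \ref{bound:Z23}, the crucial structural property that every monomial of $H^{(4,\geq 2)}$ involves at least two normal-mode indices, so after taking one partial derivative the resulting monomial still contains at least one factor with $j\notin S$. Writing $\Pi_S^{\perp} r = \Pi_S^{\perp}\xi$ (since $r^{\mu}$ is tangentially supported) and using $\|\Pi_S^{\perp}\xi\|_{\ell^1}\le \|\xi\|_{\ell^1}$, I pick up a genuine factor of $\|\xi\|_{\ell^1}$ \emph{in addition} to the Lipschitz constant, gaining a power of $\mu^{-3/2}$ over the naive estimate; choosing $\mu_0$ large enough then makes this contribution negligible.

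Putting the three bounds together yields $\tfrac{d}{dt}\|\xi\|_{\ell^1}\le A + B\|\xi\|_{\ell^1}$ with $A\lesssim \gamma^{-1}\mathtt{c}^{5/2}\mu^{-5}+\gamma^{-2}\mathtt{c}^{7/2}\mu^{-7}$ and $B\lesssim \mathtt{c}\,\mu^{-2}$. By Gronwall,
\begin{equation*}
\|\xi(t)\|_{\ell^1}\le \bigl(\|\xi(0)\|_{\ell^1}+A/B\bigr)e^{B t}.
\end{equation*}
On the time interval $[0,T]$ with $T\lesssim \mu^2/\mathtt{c}$ from \ref{time2}, the exponent $BT=O(1)$, so it suffices to impose $\|\xi(0)\|_{\ell^1}+A/B\le \mu^{-3/2}$. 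The first summand is controlled by hypothesis; the second translates into the lower bound $\mu\ge C_0\,\gamma^{-2}\mathtt{c}^{5}$ (the worst scaling coming from the second term of $A$, balanced against $B\mu^{-3/2}$), which is exactly $\mu_0$. The bootstrap then closes, giving the claimed estimate $\|\xi(t)\|_{\ell^1}\le 2\mu^{-3/2}$ for all $t\in[0,T]$.
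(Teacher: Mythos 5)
Your proposal is correct and follows essentially the same route as the paper: the same decomposition $\dot\xi = Z_0 + Z_1\xi + Z_2$, the same use of \ref{boundR2}, \ref{stimabase2}, and Lemma \ref{lemmaGT}, the same structural observation that $H^{(4,\geq 2)}$ is supported on at least two normal modes so that $\Pi_S^\perp r=\Pi_S^\perp\xi$ yields the extra smallness, and a Gronwall closure of the bootstrap; your form $\|\xi\|\le(\|\xi(0)\|+A/B)e^{Bt}$ with $BT=\mathcal{O}(1)$ is simply a tidier packaging of the paper's intermediate $c_0\mu^2\log\mu$ estimate. One small slip: balancing the second term of $A$ against $B\mu^{-3/2}$ gives the weaker constraint $\mu\gtrsim\gamma^{-4/7}\tc^{5/7}$, not $\gamma^{-2}\tc^5$; the paper's threshold $\mu_0=C_0\gamma^{-2}\tc^5$ arises from normalizing $\|Z_0\|\lesssim\mu^{-9/2}$ directly (the binding case being $\gamma^{-1}\tc^{5/2}\mu^{-5}\le\mu^{-9/2}$), which is stronger than what you actually need, so the stated $\mu_0$ still suffices and the conclusion is unaffected.
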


\begin{proof}

We set $\xi:=r-r^{\mu}$ and we study the evolution of its $\ell^1$-norm. We observe that $\Pi_S^{\perp}\xi=\Pi_S^{\perp} r$.\\
We have that $\dot{\xi}=Z_0(t)+Z_1(t)\,\xi+Z_2(t, \xi)$ with (recall \ref{hamrotatingnls})
\begin{align*}
&Z_0:=X_{\mathcal{R}}(r_{\mu}), \\
&Z_1:= D X_{H_{\mathrm{res}}}(r^{\mu}),\\
&Z_2:=X_{H_{\mathrm{res}}}(r)-X_{H_{\mathrm{res}}}(r^{\mu})-D X_{H_{\mathrm{res}}}(r^{\mu})\xi+X_{\mathcal{R}}(r)-X_{\mathcal{R}}(r_{\mu})+X_{\mathcal{Q}}(r)-X_{\mathcal{Q}}(r_{\mu}).
\end{align*}
By the differential form of Minkowsky's inequality we get
\[
\frac{d}{d t} || {\xi} ||_{\ell^1}\le ||Z_0(t)||_{\ell^1}+||Z_1(t)\,\xi ||_{\ell^1}+||Z_2(t)||_{\ell^1}.
\]
By \ref{boundR2} and \ref{stimabase2} we have
\[
 ||Z_0(t)||_{\ell^1}\lesssim \gamma^{-1} \sqrt{\tc}^5 \mu^{-5}+\gamma^{-2} \sqrt{\tc}^7 \mu^{-7}.
\]
We impose that
\[
\gamma^{-1} \sqrt{\tc}^5 \mu^{-5}\le \mu^{-9/2}, \quad \gamma^{-2} \sqrt{\tc}^7 \mu^{-7}\le \mu^{-9/2}.
\]
Since $\tc\geq 1$ and $\gamma\in (0, 1)$, the above inequalities are satisfied if
\begin{equation}\label{cooltra}
\mu\geq \gamma^{-2} \tc^5.
\end{equation}
We obtained 
\begin{equation}\label{Lozero}
 ||Z_0(t)||_{\ell^1}\lesssim  \mu^{-9/2}.
\end{equation}
By \ref{resham2} and \ref{stimabase2} we have
\begin{equation}\label{Louno}
||Z_1(t)\,\xi ||_{\ell^1}\lesssim \tc \mu^{-2} || \xi ||_{\ell^1}.
\end{equation}
To obtain a bound for $Z_2$ we use a bootstrap argument. Let us define $T_*$ as the sup of the times $t$ such that 
\begin{equation}\label{bootass2}
|| \xi(t)||_{\ell^1}\le 2 \mu^{-3/2}.
\end{equation}
We observe that for $t=0$ we have $|| \xi(0) ||_{\ell^1}\le \mu^{-5/2}$, thus $T_*>0$. A posteriori we shall prove that $T_*>T>0$. We call
\[
Z_{2, 1}:=X_{H_{\mathrm{res}}}(r)-X_{H_{\mathrm{res}}}(r^{\mu})-D X_{H_{\mathrm{res}}}(r^{\mu})\xi,
\]
\[
Z_{2, 2}:=X_{\mathcal{R}}(r)-X_{\mathcal{R}}(r_{\mu}), \quad Z_{2, 3}:=X_{\mathcal{Q}}(r)-X_{\mathcal{Q}}(r_{\mu}).
\]
We have by \ref{bootass2}
\begin{equation}\label{Lo21}
|| Z_{2, 1} ||_{\ell^1}\lesssim \sqrt{\tc} \mu^{-1} || \xi||_{\ell^1}^2\lesssim \sqrt{\tc} \mu^{-5/2} ||\xi||_{\ell^1}\lesssim \tc \mu^{-2} ||\xi||_{\ell^1},
\end{equation}
where the last inequality holds provided that 
\begin{equation}\label{yego}
\mu\geq \tc^{-1}.
\end{equation}
This inequality is satisfied if \ref{cooltra} holds, since we assumed that $\tc\geq 1$.
In a similar way we get (see \ref{boundR2})
\[
|| Z_{2, 2} ||_{\ell^1}\lesssim \sum_{j=1}^{5} || r^{\mu} ||^{5-j} || \xi||^j_{\ell^1}\lesssim \tc^2 || \xi ||_{\ell^1} (\mu^{-4}+ \sum_{j=2}^{5} \mu^{j-5} \mu^{-\frac{3}{2}(j-1)}).
\]
Since $(1-\frac{3}{2}) j-5+\frac{3}{2}\le -4$, because $j\geq 1$, then
\begin{equation}\label{Lo22}
|| Z_{2, 2} ||_{\ell^1}\lesssim \tc^2  \mu^{-4}\,|| \xi ||_{\ell^1}\lesssim \tc \mu^{-2} || \xi ||_{\ell^1},
\end{equation}
where the last inequality holds provided that
\[
\mu\geq \sqrt{\tc}.
\]
This inequality is satisfied if \ref{cooltra} holds.
Regarding the bound for $Z_{2, 3}$, we recall that the vector field $X_{H^{(4, \geq 2)}}(r^{\mu})=0$ because $H^{(4, \geq 2)}$ is supported on at least two normal sites. We have
\begin{equation}\label{Lo23}
\begin{aligned}
|| Z_{2, 3} ||_{\ell^1}=|| X_{H^{(4, \geq 2)}} (r)  ||_{\ell^1} &\lesssim \tc \mu^{-2} || \xi ||_{\ell^1}+\sqrt{c} \mu^{-1} || \xi ||_{\ell^1}^2+|| \xi ||_{\ell^1}^3\\
&\lesssim \big(\tc \mu^{-2} +\sqrt{\tc}\mu^{-5/2}+\mu^{-9/2} \big) || \xi ||_{\ell^1}\\
&\lesssim \tc \mu^{-2} || \xi ||_{\ell^1},
\end{aligned}
\end{equation}
where the last inequality holds provided that \ref{cooltra} holds.\\
By collecting the previous estimates \ref{Lozero}, \ref{Louno}, \ref{Lo21}, \ref{Lo22}, \ref{Lo23} we obtained
\[
\frac{d}{dt} || \xi ||_{\ell^1}\le C\,\mathtt{c} \Big(\mu^{-9/2}+\mu^{-2} || \xi ||_{\ell^1}   \Big)
\]
for some pure constant $C>0$. By Gronwall Lemma we have
\[
|| \xi(t) ||_{\ell^1}\le 2\mu^{-5/2} \,e^{C\,\tc\,\mu^{-2} t}  \quad \text{for}\,\,t\in [0, T_*].
\]
For times $t\in[0, c_0\mu^{2} \log(\mu)]$ with 
\begin{equation}\label{c0upper2}
 c_0 := \frac{1}{4 C\,\tc}    
\end{equation}
 we have that $|| \xi ||_{\ell^1}\le 2 \mu^{-5/2 } \mu^{1/4}\le2 \mu^{-1/4} \mu^{-2}\le \mu^{-2}$, for $\mu$ large enough. Then $T_*>c_0 \mu^{2} \log(\mu)$.
We now prove that $c_0 \mu^{2} \log(\mu)>T$. Then $T_*>T$ and we can drop the bootstrap assumption. We have
\[
c_0  \log(\mu)= \frac{1}{4 C\,\tc}  \log(\mu)\geq \frac{6}{\tc} 
\]
if $\mu$ is large enough. Then by \ref{time2} we conclude.
\end{proof}

We fix $\delta\ll 1$, $K\gg 1$, $s> 0$, $\tc\geq 1$ and we consider $\mu=N^{\frac{3}{4} s}$. Recalling \ref{assummax} we take $N=N(\tc, \gamma, s, \delta, K)$ large enough such that
\begin{align}
N^{\frac{3}{4}s}\geq&\,\, C_0 \gamma^{-2} \tc^5,  \label{accmob}\\
\frac{1}{\sqrt{2\tc}}\, N^{\frac{s}{4}}\geq&\,\,  \delta^{-1}, \label{boh}\\
\sqrt{\frac{\tc}{6}}\,N^{\frac{s}{4}}\geq&\,\, K,  \label{boh2}
\end{align}
where $C_0$ is the constant introduced in Proposition \ref{approxarg2}.
 Let us consider $r(t)$ solution of \ref{hamtime2} with $r(0)=\Gamma^{-1} r^{\mu}(0)$. By \ref{gammaid2}, \ref{stimabase2} we have
 \[
 || r(0)-r^{\mu}(0) ||_{\ell^1}\le 64 \gamma^{-1} \tc^{3/2} \mu^{-3}.
 \]
 Then by \ref{accmob} and the definition of $\mu$, $|| r(0)-r^{\mu}(0) ||_{\ell^1}\le \mu^{-5/2}$ if $C_0$ is large enough.
 Therefore we are in position to apply Proposition \ref{approxarg2}.
 Let us call 
\[
z(t)=\Gamma\big( (r_j e^{\mathrm{i} \omega(j) t})_{j\in\mathbb{Z}} \big).
\]
Reasoning as in Section \ref{secconclusion}, we can obtain the following lower bound for the Sobolev norms of $z$ at time $t=T$ by using \ref{Snls2}, \ref{taorm}
\[
|| z(T) ||_s^2\geq  \big(|z_{k_4}(T)|^2 \big) \,k_4^{2 s}\geq \mu^{-2} \frac{\mathtt{c}}{6}\,N^{2s}= \frac{\mathtt{c}}{6}\,N^{\frac{s}{2}}\stackrel{\ref{boh2}}{\geq} K^2.
\]
Regarding the Sobolev norm at time zero, we have by \ref{taorm}  
\begin{align*}
|| z(0) ||_s^2=|| r^{\mu}(0) ||^2_s &=\big( \frac{\tc-\varepsilon}{3} (\langle k_1 \rangle^{2s}+\langle k_2 \rangle^{2s}+\langle k_3 \rangle^{2s})+\varepsilon \langle k_4 \rangle^{2s} \big)  \mu^{-2}\\
&\le \big( (\tc-\varepsilon) \langle k_3 \rangle^{2s}+\varepsilon \langle k_4 \rangle^{2s} \big)  \mu^{-2}\\
&\stackrel{\ref{assummax}}{\le} \big( (\tc-\varepsilon) N^s+\varepsilon (2N)^{2s} \big)  \mu^{-2}\\
&\stackrel{\ref{parameters2}}{\le}2\tc\, N^s\,\mu^{-2}  \stackrel{\ref{boh}}{\le}\delta^2.
 \end{align*}
We conclude by giving the estimate \ref{timecondiniziale} on the diffusion time. By \ref{time2} and \ref{boh} we get
\[
T\le \mu^2 T_0\le 12 N^{ s} || z(0)||_s^{-2}.
\]
Now we prove the bound \ref{timeC2} on the diffusion time respect to the growth.
Let us fix $s>0$, $\delta\ll 1$, $\tc=1$ and assume that $K=\delta^{-\alpha}$ with $\alpha>1$. Then $\mathcal{C}:=K/\delta=\delta^{-(1+\alpha)}$. If
\[
\delta^{-1}\geq C_0^{\frac{1}{3\alpha}}  6^{-\frac{1}{2\alpha}}  \gamma^{-\frac{2}{3\alpha}} 
\]
then the condition \ref{boh2} implies \ref{accmob}, \ref{boh}.  We can choose $N^{s/4}=\sqrt{6}\,\delta^{-\alpha}$. Therefore by \ref{time2}
\[
T\le 6\,\mu^2 T_0 = \mathcal{O}( \mathcal{C}^{\frac{6\alpha}{1+\alpha}})\lesssim \mathcal{C}^6.
\]

%

\email{Email: filippo.giuliani@upc.edu}

\end{document}